\theoremstyle{plain}
\newtheorem{theorem}{Theorem}\numberwithin{theorem}{section}
{}
{}
\newtheorem{lemma}{Lemma}\numberwithin{lemma}{section}
\newtheorem{proposition}{Proposition}\numberwithin{proposition}{section}
\numberwithin{proposition}{section}
\numberwithin{corollary}{section}
\theoremstyle{definition}
\numberwithin{definition}{section}
\theoremstyle{remark} \theoremstyle{exam} \theoremstyle{ob}
\numberwithin{remark}{section}
\numberwithin{question}{section}
\numberwithin{exam}{section}
\numberwithin{ob}{section}
\numberwithin{nt}{section}
\numberwithin{equation}{section}
\begin{document}
\title[Weak type 1-1 bound of multi-parameter maximal function]
{Weak type 1-1 bound of multi-parameter maximal function}

\author
{Hoyoung Song}

\address{Hoyoung Song, Department of Mathematics \\
	Yonsei University \\
	Seoul 120-729, Korea}
\email{nonspin0070@gmail.com}

\keywords{multi parameter maximal function, weak type 1-1, maximal function}

 \begin{abstract}
We define  the mulati-parameter maximal function $\mathcal{M}$ as
$$
\mathcal{M} f(x)=\sup _{0<h_1,h_2,\cdots,h_n<1} \frac{1}{h_1h_2\cdots h_n}\left|\int_0^{h_1}\cdots \int_0^{h_n} f(x-P(t_1,\cdots,t_n)) \mathrm{d}t_1\cdots \mathrm{d} t_n\right|
$$
where $P(t_1,t_2,\cdots,t_n)$ is a real-valued multi-parameter polynomial of real variables $t_1,t_2,\cdots,t_n$. Then, we prove that $\mathcal{M}$ is of weak-type 1-1 with a bound that depends only on the coefficients of $P(t_1,t_2,\cdots,t_n)$.
\end{abstract}
\maketitle

\setcounter{tocdepth}{1}

\tableofcontents

\section{Introduction}
If $T$ is a mapping from $L^1\left(\mathbb{R}^n\right)$ to $L^{1, \infty}\left(\mathbb{R}^n\right)$, then we say that $T$ is a weaktype 1-1 operator if
$$
|\{x: T f(x)>\alpha\}| \leq \frac{C\|f\|_{L^1}}{\alpha}
$$where $C$ is a constant independent of  $f$ and $\alpha.$  In [1], Carbery, Ricci and Wright obtained the uniform weak-type 1-1 bounds of
$$
\begin{aligned}
	& M_0 f(x)=\sup _{0<h<1} \frac{1}{h} \int_0^h|f(x-P(s))| \mathrm{d} s, \\
	& H_0 f(x)=p \cdot v . \int_{|s|<1} f(x-P(s)) \frac{\mathrm{d} s}{s}
\end{aligned}
$$ where $P$ is real-valued polynomial.
In the same spirit of the one-parameter maximal operator, we shall consider the multi-parameter and one-dimensional operators
		$$
M f(x)=\sup _{0<h_1,h_2<1} \frac{1}{h_1h_2}\left|\int_0^{h_1}\int_0^{h_2} f(x-P(t_1,t_2)) \mathrm{d}t_1\mathrm{d} t_2\right|,
	$$
	$$
\mathcal{M} f(x)=\sup _{0<h_1,h_2,\cdots,h_n<1} \frac{1}{h_1h_2\cdots h_n}\left|\int_0^{h_1}\cdots \int_0^{h_n} f(x-P(t_1,\cdots,t_n)) \mathrm{d}t_1\cdots \mathrm{d} t_n\right|.
$$
The $L^p(\mathbb{R}), p>1$ boundedness of $\mathcal{M}$ is obtained by Ricci and Stein's Theorem 7.1 of \cite{7} with de Leeuw's theorem. In \cite{P}, Patel showed that $\mathfrak{M}$ is of weak-type 1-1 with a bound that depends only on the coefficients of $P$. There are also other multi-parameter sigular integral studies. In \cite{2}, Carbery, Wainger and Wright obtained the necessary and sufficient condition for $H$ to be bounded on $L^{p}$, $1 <p< \infty$ using the Newton diagram of $P$ where $$H f(x, y, z)=p \cdot v \cdot \int_{|s|<1} \int_{|t|<1} f(x-s, y-t, z-P(s, t)) \frac{\mathrm{d} s \mathrm{d} t}{s t}.$$ 
In \cite{6}, the necessary and sufficient condition for the $L^{p}$ boundedness of global version of $H$ was obtained by Patel.  More recently,  the  multiple Hilbert transforms were studied  in \cite{CWW2}, \cite{CHKY} and 
\cite{K} where  $$\mathcal{H} f(x_1,x_2,\cdots, x_n)= \int f(x_1-P_1(t_1,\cdots,t_m), \cdots, x_n-P_n(t_1,\cdots,t_m)) \frac{\mathrm{d} t_1\cdots \mathrm{d} t_m}{t_1\cdots t_m}.$$ 
Unlike the $L^{p}$ boundedness, we do not know much about the weak-type 1-1 estimate. We define $M_1 f(x, z)$ and $H_1 f(x, z)$ as
$$
\begin{aligned}
	& M_1 f(x, z)=\sup _{0<h<1} \frac{1}{h} \int_0^h|f(x-s, z-P(s))| \mathrm{d} s, \\
	& H_1 f(x, z)=p . v \cdot \int_{|s|<1} f(x-s, z-P(s)) \frac{\mathrm{d} s}{s} .
\end{aligned}
$$
Although we have $L^{p}$ boundedness of $M_1$ and $H_1$ (independent of the coefficients of $P$) for $1<p<\infty$, weak-type 1-1 bounds are not known for $M_1$ and $H_1$. We know the necessary and sufficient condition for $L^{p}$ boundedness of $H_2$\cite{P1} where 
$$H_2 f(x)=p \cdot v \cdot \int_{|s|<1} \int_{|t|<1} f(x-P(s, t)) \frac{\mathrm{d}s \mathrm{d}t}{s t}.$$ However, there is no result about the exact necessary and sufficient condition for  weak-type 1-1 boundedness of $H_2$.
In this paper, we obtain the weak-type 1-1 bound of $\mathcal{M}$ with a bound depending only on the coefficients of $P$.

\begin{theorem}\label{5a}
	 $\mathcal{M}$ is of weak-type 1-1 with a bound that is dependent only on the coefficients of $P$.
\end{theorem}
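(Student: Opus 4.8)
The plan is to run the Calderón–Zygmund method, using the $L^2$ bound recalled above for the ``good'' part and the mean–zero condition together with the geometry of $P$ for the ``bad'' part. First the reductions: replacing $f$ by $|f|$ we may assume $f\ge 0$; since $\mathcal M$ is local ($0<h_i<1$ forces $P(t)\in[-A,A]$ with $A=\sum_\gamma|a_\gamma|$, so $\mathcal M f(x)$ depends only on $f$ on $[x-A,x+A]$), splitting $f$ along a partition of $\mathbb R$ into intervals of length $2A$ and using the bounded overlap of the corresponding supports reduces matters to $f$ bounded and supported in an interval of length $2A$; by a limiting argument $P$ has finitely many monomials, and a constant term of $P$ only translates the averages, so $P(0)=0$; restricting each $h_i$ to the dyadic values $2^{-k_i}$ costs a factor $2^n$. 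Fix $\alpha>0$ (if $\alpha\lesssim\|f\|_1/A$ the estimate is trivial, since $\mathcal M f$ is then supported in a set of measure $O(A)$) and take the Calderón–Zygmund decomposition $f=g+b$ at height $\alpha$, with $\|g\|_\infty\lesssim\alpha$, $\|g\|_1\le\|f\|_1$, $b=\sum_jb_j$, $b_j$ supported on an interval $Q_j$ of length $\ell_j\lesssim A$, $\int b_j=0$, $\|b_j\|_1\lesssim\alpha\ell_j$, $\sum_j\ell_j\lesssim\|f\|_1/\alpha$. By Theorem~7.1 of \cite{7} with de Leeuw's theorem $\mathcal M$ is bounded on $L^2(\mathbb R)$, so Chebyshev gives $|\{\mathcal Mg>\alpha/2\}|\lesssim\alpha^{-2}\|g\|_\infty\|g\|_1\lesssim\|f\|_1/\alpha$. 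By sublinearity $\mathcal Mb\le\sum_j\mathcal Mb_j$, and taking $Q_j^*$ a fixed dilate of $Q_j$ and $\Omega^*=\bigcup_jQ_j^*$ (so $|\Omega^*|\lesssim\sum_j\ell_j\lesssim\|f\|_1/\alpha$) we are reduced, via Chebyshev on $(\Omega^*)^c$, to the single–interval estimate
$$\int_{(Q^*)^c}\mathcal Mb_Q(x)\,dx\ \lesssim\ \|b_Q\|_1,$$
uniformly over intervals $Q$ of length $\ell\lesssim A$ and mean–zero functions $b_Q$ on $Q$, with implied constant depending only on the coefficients of $P$.

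For the single–interval estimate, translate so that the centre of $Q$ is the origin, and for a box $B=\prod_i[0,h_i]$ let $\rho_B\ge0$ be the density of the push–forward of Lebesgue measure on $B$ under $t\mapsto P(t)$ (absolutely continuous because $\{\nabla P=0\}$ is a proper variety when $P$ is non–constant; $\rho_B$ is supported in $[-A,A]$ with $\int\rho_B=|B|$). The mean–zero condition gives
$$\Big|\frac1{|B|}\int_Bb_Q(x-P(t))\,dt\Big|=\frac1{|B|}\Big|\int_Qb_Q(y)\big(\rho_B(x-y)-\rho_B(x)\big)\,dy\Big|\le\|b_Q\|_1\sup_{|y|\le\ell}\frac{|\rho_B(x-y)-\rho_B(x)|}{|B|},$$
so the single–interval estimate follows once we prove the uniform oscillation bound
$$\int_{|x|>\ell^*}\ \sup_{B\subseteq(0,1)^n}\ \sup_{|y|\le\ell}\ \frac{|\rho_B(x-y)-\rho_B(x)|}{|B|}\,dx\ \lesssim\ 1,\qquad\ell^*\sim|Q^*|.$$
This I would attack by decomposing dyadically in three scales — the size of $x$, the length $\ell$, and the distance of $x$ to the (finitely many, for each $B$) critical values of $P$ — so that the integrable spikes of $\rho_B$ at critical values are isolated and each is summed as a convergent geometric series, the gain at each stage coming from uniform sublevel–set and van der Corput estimates for polynomials in $n$ variables (bounds on $|\{t\in B:|P(t)-\lambda|<\varepsilon\}|$ and on $\rho_B$ away from critical values, in the spirit of Ricci–Stein, Carbery–Christ–Wright and Phong–Stein–Sturm, with constants expressed through the coefficients of $P$). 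The whole argument I would organize by induction on the number of monomials of $P$, following \cite{1}: the base case $P(t)=a\,t_1^{\gamma_1}\cdots t_n^{\gamma_n}$ reduces, by the changes of variable $s_i=t_i^{\gamma_i}$ followed by collapsing the product variable, to (an iterate of) the ordinary Hardy–Littlewood maximal function; in the inductive step one partitions the range of $(h_1,\dots,h_n)$ according to which monomial of $P$ dominates on the box $\prod_i[0,h_i]$ — a partition read off from the Newton diagram of $P$ — and on each piece bounds $\mathcal M_Pf$ by $\mathcal M$ for the dominant monomial plus a maximal operator attached to a polynomial with strictly fewer monomials, handled by the inductive hypothesis with constants tracked through the coefficients.

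I expect the main obstacle to be the uniform oscillation bound for $\rho_B$. In one parameter $\rho_B$ and its critical–value structure are elementary — at most $d-1$ critical points, each producing a single integrable spike — but in several parameters the critical set of $P$ is a variety whose interaction with the shape of the box $B$ and with the Newton geometry of $P$ makes the three–scale bookkeeping delicate, all the more so because every constant must remain a function of the coefficients of $P$ alone. A secondary difficulty is guaranteeing that the dominant–monomial splitting in the inductive step really leaves a remainder of strictly lower complexity with coefficients controlled by those of $P$.
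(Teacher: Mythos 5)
Your overall framework — localize, Calder\'on--Zygmund at height $\alpha$, the Ricci--Stein/de~Leeuw $L^2$ bound for the good part, mean-zero plus a H\"ormander-type kernel estimate for the bad part, Newton diagram to organize, Hardy--Littlewood for the monomial base case — matches the paper's blueprint. But the proposal has two genuine gaps, and they are precisely at the two places you flag as ``main obstacle'' and ``secondary difficulty.''

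First, the uniform oscillation bound
\[
\int_{|x|>\ell^*}\ \sup_{B\subseteq(0,1)^n}\ \sup_{|y|\le\ell}\ \frac{|\rho_B(x-y)-\rho_B(x)|}{|B|}\,dx\ \lesssim\ 1
\]
is too strong as stated, and it is not what a CZ argument for a \emph{maximal} operator actually needs or can hope for. Taking the supremum over all boxes $B$ inside the $x$-integral throws away the geometric-series gain across dyadic scales; for a fixed box the CNSW estimate gives $\int|\rho_B(x-y)-\rho_B(x)|\,dx/|B|\lesssim |y/\mathrm{scale}(B)|^{\epsilon_0}$, but summing this over boxes with sides $2^{-k_i}$ diverges without an additional exponential gain. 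The paper therefore does \emph{not} apply CZ once to $\mathcal M$; after restricting $\mathfrak q$ to a cone $S(j)$ of the Newton diagram it decomposes further into slabs $S_m^N(j)$, applies the CZ decomposition to each maximal operator $L_j^N$ (supremum over $\bar k_m$ only), and wins by the exponential decay $2^{-\delta N}$ in all three estimates of Lemma~\ref{8a}, which is what lets the sum over $N$ converge. Your ``three-scale dyadic decomposition'' is pointing at the same phenomenon, but the quantitative gain is the whole content, and the bound as you wrote it (with $\sup_B$ inside) is not provable.

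Second, the inductive step ``bounds $\mathcal M_P f$ by $\mathcal M$ for the dominant monomial plus a maximal operator attached to a polynomial with strictly fewer monomials'' is not a valid reduction. Writing $\frac1{|B|}\int_B f(x-P) = \frac1{|B|}\int_B f(x-P_0) + \frac1{|B|}\int_B[f(x-P)-f(x-P_0)]$, the error term is a genuine \emph{oscillation} (a difference of two averages), not an average along the remainder polynomial $P-P_0$, so it is not $\mathcal M_{P-P_0}f$ and ``fewer monomials'' is not an inductive variable that closes. The paper's actual structure is different on exactly this point: when the dominating vertex $v_j$ has no vanishing coordinate, the dominant piece is a single monomial handled by Hardy--Littlewood and the error is estimated directly with exponential decay in $N$ (no induction); when $v_j$ has vanishing coordinates, the dominant part is a polynomial in \emph{fewer variables}, and the legitimate induction is on the number of \emph{parameters} $n$, reducing to $Q(j)$ controlled by an $n_0$-parameter maximal function with $n_0<n$. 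Your proposal does not distinguish these two cases, and the vanishing-coordinate case is precisely where a genuinely new idea (induction on $n$, not on monomials) is required.
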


\subsection{Organization}
In Section 2, we introduce the Newton diagram of $P$ and its properties and treat the monomial case. In Section 3, we decompose $\mathcal{M}f(x)$ with vertices of the Newton diagram of $P$. Then, we treat the non-zero coordinate case and zero coordinate case of the vertex separately. For the two cases, we introduce two important  Lemmas and show the weak-type 1-1 boundedness of $\mathcal{M}f(x)$ by the Calderón-Zygmund singular integral theory combined with the two Lemmas. In Section 4, we prove one of the two Lemmas for the non-zero coordinate case by the properties of Newton diagram and some techniques of Harmonic analysis.  In Section 5, we also show the other Lemma for the zero coordinate case. For this, we apply the sublevel set estimate and some estimates of Harmonic analysis with the properties of Newton diagram.

\section{Preliminaries}
Let $\mathfrak{t}=(t_1,t_2,\cdots,t_n)$ and $\mathfrak{m}=(m_1,m_2,\cdots,m_n)$. Then, we can rewrite the polynomial $P(t_1,\cdots,t_n)$ as $P(\mathfrak{t})=\sum_{\mathfrak{m} \in \Lambda} a_{\mathfrak{m}} \mathfrak{t}^{\mathfrak{m}}$, where $\Lambda$ is indexing the set of lattice points in $\mathbb{Z}^n$ such that $a_{\mathfrak{m}} \neq 0$. Let $\mathbb{N}_{\star}$ denote the set of non-negative integers.  For the set $A=\{a_1,a_2\cdots,a_n\}\subset\{1,2\cdots,n\}$, we set $\int f(t)dt_{i\in A}=\int f(t)dt_{a_1}dt_{a_2}\cdots dt_{a_n}$. $A \sim B$ will be used to indicate that $A$ and $B$ are essentially similar in size, unimportant error terms are neglected. For the vectors $\bar{v}$ and $\bar{w}$, let $\bar{v} \ge \bar{w}$ denote all the components of $\bar{v}$ are greater than or equal to all the components of $\bar{w}$.  
\subsection{Newton Diagram}We shall introduce the Newton Diagram. For the details, see \cite{2}, \cite{K} and \cite{P}. For each $\mathfrak{m} \in \Lambda$, set
$$
Q_{\mathfrak{m}}=\left\{(x_1,x_2,\cdots,x_n) \in \mathbb{R}^n: x_1 \geq m_1,  x_2 \geq m_2, \cdots, x_n \geq m_n\right\} .
$$
Then the smallest closed convex set containing $Q=\bigcup_{(m, n) \in \Lambda} Q_{m, n}$ is called the Newton diagram of $P$. We denote it by $\Pi$ and it is an unbounded polygon with a finite number of corners. Let $\mathcal{D}$ denote the set of its corner points. Furthermore, suppose $\mathcal{D}$ consists of $r$ corner points $v_1, v_2, \ldots, v_r$ with $v_j=(m_{1}^{j},\cdots, m_{n}^{j})$ for $1 \leq j \leq r$.  From now, let non-negative vector denote the vector whoose  all components are not negative number. By the elementary geometry, one knows that for a corner point $v_j$ of $\Pi$, there are exactly $n$'s faces of $\Pi$ intersecting $v_j$. Hence, we can choose $n$'s non-negative vectors $\bar{n}_{j}^{(1)},\bar{n}_{j}^{(2)},\cdots,  \bar{n}_{j}^{(n)}$ that are normal vectors to the faces intersecting on the corner point $v_j$.  By the convexity of the Newton diagram, one has \begin{align}\label{1a}
	 \bar{n}_{j}^{(1)}\cdot\left(v-v_j\right) \geq 0,  \bar{n}_{j}^{(2)}\cdot\left(v-v_j\right) \geq 0 \cdots,  \bar{n}_{j}^{(n)} \cdot\left(v-v_j\right) \geq 0
\end{align} for all $v \in \Lambda$ and $1 \leq j \leq r$. Now for $1 \leq j \leq r$, define \begin{align*}
T(j) & =\left\{(q_1,\cdots,q_n) \in \mathbb{N}^{n} :(q_1,\cdots,q_n) \cdot\left(v-v_j\right)>0 \text { for all } v \in \Lambda \backslash\left\{v_j\right\}\right\} \\
& =\bigcap_{v \in \Lambda \backslash\left\{v_j\right\}}\left\{(q_1,\cdots,q_n) \in \mathbb{N}^{n} :(q_1,\cdots,q_n) \cdot\left(v-v_j\right)>0\right\}.
\end{align*}
Then, we have the following Lemma.  We omit its proof. For the details, see \cite{2}, \cite{K} and \cite{P}.
\begin{lemma}\label{2a}
For $1 \leq j \leq r$,\\
(i)	\begin{align*}
		T(j)=\left\{(q_1,\cdots,q_n) \in \mathbb{N}^{n}:(q_1,\cdots,q_n)=\alpha_1 \bar{n}_{j}^{(1)}+\cdots+\alpha_n \bar{n}_{j}^{(n)} \text{for some positive reals} \ \alpha_1 \cdots, \alpha_2\right\}.
	\end{align*}
(ii) For $1 \leq j<k \leq r$,
$$
T(j) \cap T(k)=\varnothing .
$$
\end{lemma}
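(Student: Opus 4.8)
\noindent\emph{Proof sketch (proposal).} The statement is pure convex geometry of the polyhedron $\Pi=\mathrm{conv}(\Lambda)+\mathbb{R}^{n}_{\ge0}$, whose recession cone is the pointed full‑dimensional orthant $\mathbb{R}^{n}_{\ge0}$ and whose vertices are exactly $v_1,\dots,v_r$. (Throughout, $\mathbb{N}=\{1,2,3,\dots\}$, the paper reserving $\mathbb{N}_{\star}$ for the non‑negative integers.) The plan is to pass, for each corner $v_j$, to the tangent cone of $\Pi$ at $v_j$ and to its dual cone, and to read both assertions off the normal‑fan picture.

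\noindent\emph{Set‑up.} I would first record three elementary facts. (a) $\mathcal D\subseteq\Lambda$: an extreme point of $\mathrm{conv}(\Lambda)+\mathbb{R}^{n}_{\ge0}$ is already an extreme point of $\mathrm{conv}(\Lambda)$, hence lies in $\Lambda$. (b) Fix $j$ and abbreviate $\bar n^{(\ell)}:=\bar n_j^{(\ell)}$; since exactly the $n$ facets carrying these normals are active at the vertex $v_j$, the tangent cone of $\Pi$ at $v_j$ is
$$
K_j=\{\,y\in\mathbb{R}^{n}:\ \bar n^{(\ell)}\!\cdot y\ge0,\ \ell=1,\dots,n\,\},
$$
and $K_j$ is pointed (the tangent cone at a vertex of a polyhedron contains no line); consequently $\bar n^{(1)},\dots,\bar n^{(n)}$ are linearly independent, for otherwise a nonzero $w$ orthogonal to all of them would give $\mathbb{R}w\subseteq K_j$. (c) On the other hand $K_j=\mathrm{cone}(\Pi-v_j)$ and $\Pi-v_j=\mathrm{conv}(\Lambda-v_j)+\mathbb{R}^{n}_{\ge0}$, whence
$$
K_j=\mathrm{cone}\big(\{v-v_j:\ v\in\Lambda\setminus\{v_j\}\}\cup\{e_1,\dots,e_n\}\big),
$$
$e_1,\dots,e_n$ being the standard basis vectors.

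\noindent\emph{Part (i).} By LP duality (the Farkas lemma) applied to the half‑space description in (b), $K_j^{\ast}=\mathrm{cone}(\bar n^{(1)},\dots,\bar n^{(n)})$, and since these form a basis, $\mathrm{int}(K_j^{\ast})=\{\alpha_1\bar n^{(1)}+\dots+\alpha_n\bar n^{(n)}:\ \alpha_1,\dots,\alpha_n>0\}$, which is precisely the right‑hand side of (i). Hence it suffices to show that for $q\in\mathbb{N}^{n}$ one has $q\in T(j)\iff q\in\mathrm{int}(K_j^{\ast})$. For one implication: if $q\in\mathrm{int}(K_j^{\ast})$ then $q\cdot y>0$ for every $y\in K_j\setminus\{0\}$, and since $v-v_j\in\Pi-v_j\subseteq K_j$ is nonzero for $v\in\Lambda\setminus\{v_j\}$, we get $q\in T(j)$. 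Conversely, if $q\in T(j)\cap\mathbb{N}^{n}$ then $q$ is strictly positive on every generator of $K_j$ listed in (c) — on $v-v_j$ by the definition of $T(j)$, and on each $e_i$ because $q_i\ge1$ — and since $\mathrm{int}(K_j^{\ast})=\{q:\ q\cdot g>0\text{ for all generators }g\text{ of }K_j\}$, we conclude $q\in\mathrm{int}(K_j^{\ast})$. (Here the convention $\mathbb{N}=\{1,2,\dots\}$ is essential: the claim fails for a $q$ with a vanishing coordinate.)

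\noindent\emph{Part (ii), and the main obstacle.} Part (ii) needs no machinery: if $q\in T(j)\cap T(k)$ with $j\neq k$, then applying the definition of $T(j)$ to $v=v_k$ (which lies in $\Lambda\setminus\{v_j\}$ by (a)) gives $q\cdot(v_k-v_j)>0$, and symmetrically $q\cdot(v_j-v_k)>0$; adding the two yields $0>0$, a contradiction. (Equivalently, by (i) the sets $T(j)$ sit inside the interiors of the maximal cones $K_j^{\ast}$ of the normal fan of $\Pi$, which are pairwise disjoint.) None of this is deep; the only place that wants care is the bookkeeping of the Set‑up — the two descriptions of $K_j$ (by inequalities versus by generators), the linear independence of the $\bar n^{(\ell)}$, and the identity $\mathrm{int}(K_j^{\ast})=\{q:\ q\cdot g>0\ \forall g\}$ — after which (i) and (ii) are both immediate.
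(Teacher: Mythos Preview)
Your argument is correct. The paper does not actually supply a proof of this lemma: it writes ``We omit its proof. For the details, see \cite{2}, \cite{K} and \cite{P}.'' Your convex-geometric route via the tangent cone $K_j$ and its dual (the normal cone at $v_j$) is precisely the standard argument one finds in those references; in particular the identification of the right-hand side of (i) with $\mathbb{N}^{n}\cap\mathrm{int}(K_j^{\ast})$, together with the generator description $K_j=\mathrm{cone}(\{v-v_j:v\in\Lambda\setminus\{v_j\}\}\cup\{e_1,\dots,e_n\})$, is exactly how the equality is established there. Your two-line proof of (ii) using $v_k\in\Lambda\setminus\{v_j\}$ is also the intended one, and you are right to flag that it requires the observation $\mathcal D\subseteq\Lambda$ and that the convention $\mathbb{N}=\{1,2,\dots\}$ (as opposed to $\mathbb{N}_\star$) is needed in (i) for the strict positivity $q\cdot e_i>0$.
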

Now, we define $S(j)$ as  $$S(j):=\left\{(q_1,\cdots,q_n) \in \mathbb{N}_{\star}^{n}: \exists_{\alpha_1 \geq 0,\cdots,\alpha_n \geq 0}(q_1,\cdots,q_n)=\alpha_1 \bar{n}_{j}^{(1)}+\cdots+\alpha_n \bar{n}_{j}^{(n)}\right\}.$$ Then, 
by the definition of $S(j)$, one can obtain that
	$$
	\bigcup_{j=1}^r S(j)=\mathbb{N}_{\star}^{n}.
	$$
Moreover, with the elemetary calculation, one can know that there exists $d_j>0$ such that
	$$S(j):=\left\{(q_1,\cdots,q_n) \in \mathbb{N}_{\star}^{n}: \exists_{(n_1,\cdots,n_n)\in \mathbb{N}_{\star}^{n}}(q_1,\cdots,q_n)=\frac{n_1}{d_j} \bar{n}_{j}^{(1)}+\cdots+\frac{n_n}{d_j} \bar{n}_{j}^{(n)}\right\}.$$
Let $\mathfrak{q}:=(q_1,\cdots,q_n)$. For  $1< m< n$, we also set $\bar{k}_m:=(k_1,\cdots k_{m-1},k_{m+1},\cdots, k_n)$. Especially, let $\bar{k}_{1}:=(k_2,\cdots, k_n)$ and $\bar{k}_{n}:=(k_1,\cdots, k_{n-1})$. We define $S_1(j),\cdots,S_n(j)$ as for $1\le m \le n$,
\begin{align*}
	S_m(j):=\{\mathfrak{q} \in S(j):\mathfrak{q}=\frac{N+k_{1}}{d_j}\bar{n}_{j}^{(1)}+\cdots+\frac{N+k_{m-1}}{d_j}&\bar{n}_{j}^{(m-1)}+\frac{N}{d_j}\bar{n}_{j}^{(m)}+\frac{N+k_{m+1}}{d_j}\bar{n}_{j}^{(m+1)}+\\
	&\cdots+\frac{N+k_{n}}{d_j}\bar{n}_{j}^{(n)}, \bar{k}_m \in \mathbb{N}_{\star}^{n-1}, N \in \mathbb{N}_{\star}\}.
\end{align*}
Note that $$S(j)=\bigcup_{m=1}^{n}S_m(j).$$
Now, we decompose
$$
S_m(j)=\bigcup_{N \in \mathbb{N}_{\star}} S_m^N(j)
$$
where

\begin{align*}
S_m^N(j):=\{\mathfrak{q} \in S(j):\mathfrak{q}=\frac{N+k_{1}}{d_j}\bar{n}_{j}^{(1)}+\cdots+\frac{N+k_{m-1}}{d_j}\bar{n}_{j}^{(m-1)}&+\frac{N}{d_j}\bar{n}_{j}^{(m)}+\frac{N+k_{m+1}}{d_j}\bar{n}_{j}^{(m+1)}+\\
&\cdots+\frac{N+k_{n}}{d_j}\bar{n}_{j}^{(n)}, \bar{k}_m \in \mathbb{N}_{\star}^{n-1}\}.
\end{align*}

\begin{lemma}\label{1b}For each $1\le j\le r$, there exists $\beta_j>0$ such that
	\begin{align}\label{3a}
		\mathfrak{q} \cdot\left(v-v_j\right) \ge \beta_j N
	\end{align}
	for every $v \in \Lambda \backslash\left\{v_j\right\}$ and $\mathfrak{q} \in S_m^N(j)$.
\end{lemma}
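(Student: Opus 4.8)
The plan is to isolate, inside any $\mathfrak{q}\in S_m^N(j)$, the ``diagonal'' part of its coefficient vector and to check that this part alone already forces the claimed linear growth in $N$, the remaining part being harmless by \eqref{1a}. Concretely, set
\[
\mathfrak{q}_0 \;:=\; \tfrac{1}{d_j}\bigl(\bar{n}_{j}^{(1)}+\bar{n}_{j}^{(2)}+\cdots+\bar{n}_{j}^{(n)}\bigr).
\]
From the definition of $S_m^N(j)$, every $\mathfrak{q}\in S_m^N(j)$ can be written as
\[
\mathfrak{q} \;=\; N\,\mathfrak{q}_0 \;+\; \frac{1}{d_j}\sum_{i\neq m} k_i\,\bar{n}_{j}^{(i)},\qquad \bar{k}_m\in\mathbb{N}_{\star}^{n-1},
\]
since the $i$-th coefficient equals $\tfrac{N+k_i}{d_j}$ for $i\neq m$ and $\tfrac{N}{d_j}$ for $i=m$. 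Taking the inner product with $v-v_j$ for $v\in\Lambda\setminus\{v_j\}$, and using \eqref{1a} (which gives $\bar{n}_{j}^{(i)}\cdot(v-v_j)\geq 0$ for $v\in\Lambda$) together with $k_i\geq 0$, we get
\[
\mathfrak{q}\cdot(v-v_j)\;=\;N\,\bigl(\mathfrak{q}_0\cdot(v-v_j)\bigr)+\frac{1}{d_j}\sum_{i\neq m}k_i\,\bigl(\bar{n}_{j}^{(i)}\cdot(v-v_j)\bigr)\;\geq\; N\,\bigl(\mathfrak{q}_0\cdot(v-v_j)\bigr).
\]
Thus the lemma reduces to showing that $\mathfrak{q}_0\cdot(v-v_j)$ is bounded below by a positive constant, uniformly over the finite set $\Lambda\setminus\{v_j\}$.

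The key step is the claim $\mathfrak{q}_0\in T(j)$. By construction $\mathfrak{q}_0=\sum_{i=1}^{n}\alpha_i\bar{n}_{j}^{(i)}$ with all $\alpha_i=1/d_j>0$, so by Lemma \ref{2a}(i) it remains only to verify $\mathfrak{q}_0\in\mathbb{N}^{n}$, i.e. that $\mathfrak{q}_0$ is a lattice point with strictly positive coordinates. Taking $n_1=\cdots=n_n=1$ in the $d_j$-description of $S(j)$ shows $\mathfrak{q}_0\in S(j)\subseteq\mathbb{N}_{\star}^{n}$, so $\mathfrak{q}_0$ is a non-negative lattice point; and none of its coordinates can vanish, because the set $\{\sum_{i=1}^{n}\alpha_i\bar{n}_{j}^{(i)}:\alpha_i>0\}$ is an open subset of $\mathbb{R}_{\geq 0}^{n}$ (the $\bar{n}_{j}^{(i)}$ are non-negative and, at the corner $v_j$, linearly independent), hence is contained in $\mathbb{R}_{>0}^{n}$, and $\mathfrak{q}_0$ lies in it. Therefore $\mathfrak{q}_0\in T(j)$, and by the defining property of $T(j)$ we conclude $\mathfrak{q}_0\cdot(v-v_j)>0$ for every $v\in\Lambda\setminus\{v_j\}$.

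Finally, since $\Lambda$ is finite I would set
\[
\beta_j\;:=\;\min_{v\in\Lambda\setminus\{v_j\}}\mathfrak{q}_0\cdot(v-v_j)\;>\;0,
\]
which depends only on $v_j$ and the normals $\bar{n}_{j}^{(i)}$, hence only on the Newton diagram of $P$, and in particular is independent of $m$ and of $N$. Combining this with the displayed inequality, for every $\mathfrak{q}\in S_m^N(j)$ and every $v\in\Lambda\setminus\{v_j\}$ we obtain
\[
\mathfrak{q}\cdot(v-v_j)\;\geq\; N\,\bigl(\mathfrak{q}_0\cdot(v-v_j)\bigr)\;\geq\;\beta_j N,
\]
which is precisely \eqref{3a}. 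The only genuinely delicate point is the verification that $\mathfrak{q}_0$ has strictly positive integer coordinates, i.e. that $\mathfrak{q}_0$ lands in the open cone $T(j)$ rather than merely on its boundary; granting that, the estimate is an immediate consequence of \eqref{1a} and the finiteness of $\Lambda$.
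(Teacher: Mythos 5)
Your decomposition is exactly the paper's: you write $\mathfrak{q}=N\mathfrak{q}_0+\tfrac{1}{d_j}\sum_{i\neq m}k_i\bar{n}_j^{(i)}$, discard the non-negative remainder via \eqref{1a}, and take $\beta_j=\min_{v\in\Lambda\setminus\{v_j\}}\mathfrak{q}_0\cdot(v-v_j)$, which agrees with the paper's \eqref{2b}. The gap is in how you justify $\mathfrak{q}_0\cdot(v-v_j)>0$. You try to place $\mathfrak{q}_0$ in $T(j)$ via Lemma~\ref{2a}(i), for which you must show $\mathfrak{q}_0\in\mathbb{N}^n$, and to that end you assert ``taking $n_1=\cdots=n_n=1$ in the $d_j$-description of $S(j)$ shows $\mathfrak{q}_0\in S(j)\subseteq\mathbb{N}_\star^n$.'' But that description of $S(j)$ is
\[
S(j)=\Bigl\{(q_1,\dots,q_n)\in\mathbb{N}_\star^n:\exists\,(n_1,\dots,n_n)\in\mathbb{N}_\star^n,\ (q_1,\dots,q_n)=\tfrac{n_1}{d_j}\bar{n}_j^{(1)}+\cdots+\tfrac{n_n}{d_j}\bar{n}_j^{(n)}\Bigr\},
\]
which quantifies over lattice points that admit such a representation; it does not assert that every choice of $(n_1,\dots,n_n)\in\mathbb{N}_\star^n$ produces a lattice point. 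So ``take $n_i=1$'' does not yield $\mathfrak{q}_0\in\mathbb{N}_\star^n$, and the membership $\mathfrak{q}_0\in T(j)$ is unproven.

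Fortunately the whole excursion through $T(j)$ is unnecessary, and this is where the paper is more economical. By \eqref{1a} each $\bar{n}_j^{(i)}\cdot(v-v_j)\geq 0$. Since the $n$ normals $\bar{n}_j^{(1)},\dots,\bar{n}_j^{(n)}$ at the corner $v_j$ are linearly independent, they form a basis of $\mathbb{R}^n$; if all $n$ of these inner products vanished, $v-v_j$ would be orthogonal to a basis and hence zero, contradicting $v\neq v_j$. Therefore at least one summand is strictly positive, so $(\bar{n}_j^{(1)}+\cdots+\bar{n}_j^{(n)})\cdot(v-v_j)>0$, i.e.\ $\mathfrak{q}_0\cdot(v-v_j)>0$, with no reference to integrality. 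Replacing your $T(j)$-membership argument with this direct observation closes the gap and recovers the paper's proof verbatim.
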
\begin{proof}
For every $\mathfrak{q} \in S_m^N(j)$ we can write
$$
\mathfrak{q}=\frac{N+k_{1}}{d_j}\bar{n}_{j}^{(1)}+\cdots+\frac{N+k_{m-1}}{d_j}\bar{n}_{j}^{(m-1)}+\frac{N}{d_j}\bar{n}_{j}^{(m)}+\frac{N+k_{m+1}}{d_j}\bar{n}_{j}^{(m+1)}+\cdots+\frac{N+k_{n}}{d_j}\bar{n}_{j}^{(n)}
$$
for some $\bar{k}_m \in \mathbb{N}_{\star}^{n-1}$.  By the fact that $\bar{n}_{j}^{(1)},\cdots,\bar{n}_{j}^{(n)}$ are linearly independent with $(\ref{1a})$, we have
$$
\left(v-v_j\right) \cdot(\bar{n}_{j}^{(1)}+\cdots+\bar{n}_{j}^{(n)})>0
$$
for all $v \in \Lambda \backslash\left\{v_j\right\}$.  Taking
\begin{align}\label{2b}
	\beta_j:=\min _{v \in \Lambda \backslash\left\{v_j\right\}} \frac{1}{d_j}\left(v-v_j\right) \cdot(\bar{n}_{j}^{(1)}+\cdots+\bar{n}_{j}^{(n)})>0
\end{align}
immediately yields (\ref{3a}).
\end{proof}
\subsection{The Monomial Case} Note that $$\mathcal{M} f(x) \sim \sup _{\mathfrak{q} \in \mathbb{N}_{\star}^{n}} 2^{q_1+\cdots q_n}\left|\int_{2^{-q_1}}^{2^{-q_1+1}} \cdots \int_{2^{-q_n}}^{2^{-q_n+1}} f(x-P(\mathfrak{t})) \mathrm{d}t_1 \cdots\mathrm{d} t_n\right|.$$
\begin{lemma}\label{7a}
If $P(\mathfrak{t})$ is a monomial, then
	$$
	\mathcal{M} f(x) \leq 2 M_H f(x)
	$$
	where $M_H$ denotes the usual Hardy-Littlewood maximal function.
\end{lemma}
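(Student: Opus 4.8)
The plan is to reduce the $n$-fold box average to a weighted one-dimensional average by peeling off a single variable, and then to dominate that weighted average by the Hardy--Littlewood maximal function. First I would make the harmless normalizations. Since $M_H$ is built from $|f|$ and is invariant under translations and reflections, and since a factor $t_i^{0}$ contributes the trivial average $\frac1{h_i}\int_0^{h_i}(\cdot)\,\mathrm dt_i=(\cdot)$, we may assume (discarding any $t_i$ on which $P$ does not depend, and noting $P$ is not a nonzero constant, as is implicit throughout this circle of ideas) that $P(\mathfrak t)=a\,t_1^{m_1}\cdots t_n^{m_n}$ with $a>0$ and each $m_i\ge 1$. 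Fixing $0<h_1,\dots,h_n<1$ and substituting $t_i=h_is_i$, the box average becomes
\[
\frac{1}{h_1\cdots h_n}\int_0^{h_1}\!\!\cdots\!\int_0^{h_n} f\big(x-P(\mathfrak t)\big)\,\mathrm dt
=\int_{[0,1]^n} f\Big(x-A\,s_1^{m_1}\cdots s_n^{m_n}\Big)\,\mathrm ds,\qquad A:=a\,h_1^{m_1}\cdots h_n^{m_n}>0 .
\]

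Next I would isolate the $s_1$-integration by Fubini and substitute $w=s_1^{m_1}$, so that $\mathrm ds_1=\tfrac1{m_1}w^{1/m_1-1}\,\mathrm dw$. Writing $c=c(s_2,\dots,s_n):=A\,s_2^{m_2}\cdots s_n^{m_n}$, which lies in $(0,A]$ for a.e.\ $(s_2,\dots,s_n)\in[0,1]^{n-1}$, the inner integral obeys
\[
\Big|\int_0^1 f\big(x-c\,s_1^{m_1}\big)\,\mathrm ds_1\Big|
\le \frac1{m_1}\int_0^1 \big|f(x-cw)\big|\,w^{1/m_1-1}\,\mathrm dw .
\]
The crux is the elementary one-variable inequality: for $m\ge 1$ and any measurable $g\ge 0$ on $[0,1]$,
\[
\frac1m\int_0^1 g(w)\,w^{1/m-1}\,\mathrm dw\ \le\ \sup_{0<\delta\le 1}\ \frac1\delta\int_0^\delta g(w)\,\mathrm dw .
\]
I would prove this by setting $G(\delta)=\int_0^\delta g$, so $G(\delta)\le \delta M$ with $M:=\sup_{0<\delta\le1}\delta^{-1}G(\delta)$, and integrating by parts: since $w^{1/m-1}G(w)\le M w^{1/m}\to0$ as $w\to0^+$,
\[
\frac1m\int_0^1 w^{1/m-1}\,\mathrm dG(w)=\frac1m G(1)+\frac{m-1}{m^2}\int_0^1 G(w)\,w^{1/m-2}\,\mathrm dw\le \frac Mm+\frac{m-1}{m^2}\,M\int_0^1 w^{1/m-1}\,\mathrm dw=M .
\]
Equivalently, $\tfrac1m w^{1/m-1}$ is a decreasing probability density on $(0,1]$, and averaging a nonnegative function against a decreasing density is dominated by the supremum of its left-hand averages.

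Finally I would assemble the pieces. Applying the displayed inequality with $g(w)=|f(x-cw)|$ and then changing variables $y=x-cw$ gives, for a.e.\ $(s_2,\dots,s_n)$,
\[
\frac1{m_1}\int_0^1 |f(x-cw)|\,w^{1/m_1-1}\,\mathrm dw
\ \le\ \sup_{0<\delta\le 1}\frac1{c\delta}\int_{x-c\delta}^{x}|f(y)|\,\mathrm dy
\ \le\ \sup_{\rho>0}\frac1{\rho}\int_{x-\rho}^{x}|f(y)|\,\mathrm dy
\ \le\ 2\,M_H f(x),
\]
a bound independent of $s_2,\dots,s_n$; integrating over $(s_2,\dots,s_n)\in[0,1]^{n-1}$ (a probability space) and then taking the supremum over $0<h_1,\dots,h_n<1$ yields $\mathcal{M}f(x)\le 2\,M_H f(x)$. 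The only genuinely delicate points are bookkeeping: that the boundary term in the integration by parts vanishes, that $\{c=0\}$ is null so the pointwise estimate suffices after the last integration, and that the factor $2$ is exactly the passage from one-sided to centered Hardy--Littlewood averages (with the uncentered maximal function one even obtains the bound $M_H f(x)$). I do not expect a substantive analytic obstacle beyond identifying the correct single-variable change of variables.
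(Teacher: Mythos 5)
Your proof is correct and follows essentially the same strategy as the paper's: reduce to a positive monomial, substitute in one variable to produce a one-dimensional integral against the decreasing power weight $w^{1/m-1}$, and dominate the average against that weight by the one-sided (hence, up to the factor $2$, the centered) Hardy--Littlewood maximal function. The only cosmetic differences are that you rescale to $[0,1]^n$ and substitute in $t_1$ while the paper works on dyadic annuli and substitutes in $t_n$, and that you establish the key weighted inequality by integration by parts whereas the paper does so by the equivalent layer-cake/Fubini trick (writing $u^{1/m-1}=\int_0^{u^{1/m-1}}\mathrm dz$ and exchanging the order of integration).
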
\begin{proof}
Let $P(\mathfrak{t})=a_{\mathfrak{m}} \mathfrak{t}^{\mathfrak{m}}$ and suppose  that the coefficients of $P$ are the positive real number. Then, after a change of variable $\left(a_{\mathfrak{m}} \mathfrak{t}^{\mathfrak{m}}=u\right)$, we have
 $$
 \begin{aligned}
 	&\mathcal{M} f(x)\sim \sup _{\mathfrak{q}\in \mathbb{Z}_{+}^{n}} \frac{1}{m_n\left(a_{\mathfrak{m}}\right)^{1 / m_n}} 2^{q_1+\cdots+q_{n-1}}\\
 	&\times \int_{t_1=2^{-q_1}}^{2^{-q_1+1}}\cdots \int_{t_{n-1}=2^{-q_{n-1}}}^{2^{-q_{n-1}+1}} \frac{2^{q_n}}{t_1^{m_1 / m_n}\cdots t_{n-1}^{m_{n-1}/m_n}} \int_{a_{\mathfrak{m}} t_1^{m_1}\cdots t_{n-1}^{m_{n-1}} 2^{-m_{n} q_{n}}}^{a_{\mathfrak{m}} t_1^{m_1}\cdots t_{n-1}^{m_{n-1}} 2^{-m_{n} q_{n}+m_{n}}} f(x-u) \frac{\mathrm{d} u}{u^{1-1 / m_n}} dt_1\cdots dt_{n-1}.
 \end{aligned}
 $$
So, it suffices to show that
 \begin{align}\label{4a}
 	\int_{0}^{a_{\mathfrak{m}} t_1^{m_1}\cdots t_{n-1}^{m_{n-1}} 2^{-m_{n} q_{n}+m_{n}}} f(x-u) \frac{\mathrm{d} u}{u^{1-1 / m_n}}  \leq 2 m_n\left(a_{\mathfrak{m}}\right)^{1 / m_n} 2^{-q_n} t_1^{m_1 / m_n}\cdots t_{n-1}^{m_{n-1}/m_n} M_H f(x).
 \end{align}
  This is trivial when $n=1$. When $n>1$, note that
 $$
 \int_0^{a_{\mathfrak{m}} t_1^{m_1}\cdots t_{n-1}^{m_{n-1}} 2^{-m_{n} q_{n}+q_{n}}} f(x-u) \frac{\mathrm{d} u}{u^{1-1 / n}}=\int_{u=0}^{a_{\mathfrak{m}} t_1^{m_1}\cdots t_{n-1}^{m_{n-1}} 2^{-m_{n} q_{n}+q_{n}}} f(x-u)\left(\int_{z=0}^{1 / u^{1-1 / n}} \mathrm{~d} z\right) \mathrm{d} u.
 $$ By changing the order of integral, one can have (\ref{4a}).
\end{proof}

\section{Decomposition And  The Calderón-Zygmund Singular Integral Theory}
Our proof is based on the arguments of \cite{1} and \cite{P}. We define $C^{\infty}$-function $\eta(s)$ supported in $\left[\frac{1}{2}, 4\right]$ as $\eta(s)=1$  if $s\in[1,2]$. Then
$$
\begin{aligned}
	\mathcal{M} f(x) & \lesssim \sup _{\mathfrak{q} \in \mathbb{N}_{\star}^{n}} 2^{q_1+\cdots q_n}\left|\int f(x-P(\mathfrak{t}))\eta(2^{q_1}t_1)\cdots \eta(2^{q_n}t_n) \mathrm{d}t_1 \cdots\mathrm{d} t_n\right| \\
	& =\sup _{\mathfrak{q} \in \mathbb{N}_{\star}^{n}} \left|\int f(x-P(2^{-q_1}t_1,\cdots, 2^{-q_n}t_n))\eta(t_1)\cdots \eta(t_n) \mathrm{d}t_1 \cdots\mathrm{d} t_n\right| \\
	& \leq \sum_{j=1}^r \sup _{\mathfrak{q} \in S(j)}\left|\int f(x-P(2^{-q_1}t_1,\cdots, 2^{-q_n}t_n))\eta(t_1)\cdots \eta(t_n) \mathrm{d}t_1 \cdots\mathrm{d} t_n\right| \\
	& :=\sum_{j=1}^r \mathcal{M}(j) f(x) .
\end{aligned}
$$ Hence, the following Theorem \ref{6a} implies Theorem \ref{5a}
\begin{theorem}\label{6a}
	$\mathcal{M}(j)$ is of weak-type 1-1  for $1 \leq j \leq r$.
\end{theorem}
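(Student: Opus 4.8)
Fix $j\in\{1,\dots,r\}$ and write, for $\mathfrak q\in S(j)$,
$$
\mu_{\mathfrak q}\ast f(x)=\int f\bigl(x-P(2^{-q_1}t_1,\dots,2^{-q_n}t_n)\bigr)\,\eta(t_1)\cdots\eta(t_n)\,\mathrm d t_1\cdots\mathrm d t_n,
$$
so that $\mathcal{M}(j)f(x)=\sup_{\mathfrak q\in S(j)}|\mu_{\mathfrak q}\ast f(x)|$. The plan is to run the Calderón--Zygmund machinery adapted to the vertex $v_j$, exploiting the decomposition $S(j)=\bigcup_{m=1}^{n}\bigcup_{N\in\mathbb N_\star}S_m^N(j)$ together with Lemma~\ref{1b}: on $S_m^N(j)$ one has $\mathfrak q\cdot(v-v_j)\ge\beta_j N$ for every $v\in\Lambda\setminus\{v_j\}$, whence
$$
P(2^{-q_1}t_1,\dots,2^{-q_n}t_n)=2^{-\mathfrak q\cdot v_j}\Bigl(a_{v_j}\mathfrak t^{v_j}+O(2^{-\beta_j N})\Bigr)\qquad(t\in[\tfrac12,4]^n),
$$
i.e. on this piece $\mu_{\mathfrak q}$ is, at error scale $2^{-\beta_j N}$, a perturbation of the monomial measure of Lemma~\ref{7a}. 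Following the announced plan I would then split according to whether every coordinate of $v_j$ is positive (the non-zero coordinate case) or $v_j$ has a vanishing coordinate (the zero coordinate case); the split enters mainly through the design of the exceptional set below and through which of the two key Lemmas is invoked.

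Given $\alpha>0$, perform a Calderón--Zygmund decomposition of $|f|$ at height $\alpha$: $f=g+b$ with $b=\sum_i b_i$, each $b_i$ supported on a dyadic interval $Q_i$, $\int b_i=0$, $\|b_i\|_1\lesssim\alpha|Q_i|$, $\sum_i|Q_i|\lesssim\alpha^{-1}\|f\|_1$, $\|g\|_\infty\lesssim\alpha$ and $\|g\|_1\le\|f\|_1$. For the good part one uses the $L^2$ boundedness of $\mathcal{M}(j)$ --- a consequence of the $L^p$ boundedness of $\mathcal{M}$ ($p>1$) recalled in the introduction, since $\mathcal{M}(j)f\lesssim\mathcal{M}f$ up to the harmless cutoffs $\eta$ --- to get
$$
\bigl|\{x:\mathcal{M}(j)g(x)>\alpha/2\}\bigr|\le\frac{4}{\alpha^2}\|\mathcal{M}(j)g\|_2^2\lesssim\frac{1}{\alpha^2}\|g\|_\infty\|g\|_1\lesssim\frac{\|f\|_1}{\alpha}.
$$

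For the bad part, the heart of the matter, I would first define an exceptional set $E$ as the union, over $i$ and over the relevant dyadic scales, of the $P$-adapted enlargements of $Q_i$: roughly, $x$ is excluded if $x-P(2^{-q_1}t_1,\dots,2^{-q_n}t_n)$ can lie within a fixed multiple of $|Q_i|$ of $Q_i$ for some admissible $\mathfrak q$ and $t\in[\tfrac12,4]^n$. Since near $v_j$ the image $P(2^{-q}[\tfrac12,4]^n)$ is an interval of length comparable to $|a_{v_j}|2^{-\mathfrak q\cdot v_j}$, counting dyadic scales gives $|E|\lesssim\alpha^{-1}\|f\|_1$. On $E^c$ one combines the cancellation $\int b_i=0$ with the subadditivity $\mathcal{M}(j)b\le\sum_{m=1}^{n}\sum_{N\in\mathbb N_\star}\mathcal{M}_m^N(j)b\le\sum_{m,N}\sum_i\mathcal{M}_m^N(j)b_i$, where $\mathcal{M}_m^N(j)h=\sup_{\mathfrak q\in S_m^N(j)}|\mu_{\mathfrak q}\ast h|$, and the two Lemmas referred to in the introduction are, in essence, the estimates
$$
\int_{E^c}\mathcal{M}_m^N(j)b_i(x)\,\mathrm d x\ \lesssim\ 2^{-\delta N}\,\|b_i\|_1,\qquad\delta=\delta(j)>0,
$$
the first proved in Section~4 in the non-zero coordinate case and the second in Section~5 in the zero coordinate case. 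Summing this geometric series in $N$, over the finitely many $m\le n$, and over $i$ via $\sum_i\|b_i\|_1\lesssim\|f\|_1$ gives $\int_{E^c}\mathcal{M}(j)b\lesssim\|f\|_1$, so $|\{x\in E^c:\mathcal{M}(j)b(x)>\alpha/2\}|\lesssim\alpha^{-1}\|f\|_1$ by Chebyshev. Combining the good and bad parts with the bound on $|E|$ yields Theorem~\ref{6a}, and summing over $j$ gives Theorem~\ref{5a}.

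The genuine obstacles are, first, calibrating $E$ so that it is simultaneously small ($|E|\lesssim\alpha^{-1}\|f\|_1$) and large enough that on its complement the singular nature of $\mu_{\mathfrak q}$ is tamed --- this is precisely where the geometry of the Newton diagram and Lemma~\ref{1b} enter --- and, second and mainly, proving the two displayed Lemmas. In the non-zero coordinate case (Section~4) I expect to bound $\mathcal{M}_m^N(j)b_i$ by an $L^2$ estimate with a $2^{-\delta N}$ gain coming from oscillatory-integral/van der Corput bounds for $\widehat{\mu_{\mathfrak q}}$, combined with an $L^1$ tube estimate off $E$; the monomial computation of Lemma~\ref{7a} is the model. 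In the zero coordinate case (Section~5) the monomial $\mathfrak t^{v_j}$ fails to depend on some of the variables, the pushforward measure is more singular, and one must instead run the argument through sublevel set estimates for $P$ --- the more delicate of the two.
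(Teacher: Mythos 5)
Your overall framework — Calderón--Zygmund decomposition, splitting by whether $v_j$ has a vanishing coordinate, comparison to the monomial $a_{v_j}\mathfrak t^{v_j}$ via Lemma~\ref{7a}, and a geometric gain $2^{-\delta N}$ from Lemma~\ref{1b} that allows summing over scales — matches the paper's, and your variant (a single Calderón--Zygmund decomposition at height $\alpha$, an ungained $L^2$ bound for the good part, a gained $L^1$ bad-part estimate summed geometrically in $N$) is a legitimate alternative to the paper's per-piece decompositions at $N$-dependent height $2^{\varepsilon N}\lambda$ (Proposition~\ref{160a}). However, two things need fixing.

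The exceptional set should just be $\Omega^*=\bigcup_i Q_i^*$, the union of doubled intervals. Your ``$P$-adapted enlargements'' are in the style of Carbery--Ricci--Wright, but here the dilated kernels $\nu_{\bar{k}_{m}}^{N,(\bar{k}_{m})}$ have compact support and the paper derives the bad-part $L^1$ estimate directly from the Hörmander-type bound (\ref{10a}) off $Q_i^*$; nothing more elaborate is required. More to the point, your claim that ``counting dyadic scales gives $|E|\lesssim\alpha^{-1}\|f\|_1$'' is not justified as stated: for a fixed $Q_i$ there are infinitely many admissible $\mathfrak q$, and the measure of the union of the corresponding translates of $CQ_i$ is not obviously controlled without additional structure.

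The more serious gap is the zero-coordinate case. There the comparison term is not the monomial (which would fail to depend on the variables indexed by $A$ at all) but $\tilde P_0(\mathfrak t)=\sum_{v\in\Lambda_0}2^{-\mathfrak q\cdot(v-v_j)}a_v\mathfrak t^v$, the piece of $\tilde P$ supported on $\Lambda_0$, and the resulting main term $Q(j)f$ is dominated, after integrating out the $A$-variables as in (\ref{17a}), by an $n_0$-parameter maximal function with $n_0=|B|<n$. The paper closes that part by \emph{induction on the number of parameters}, and this induction is essential — there is no Hardy--Littlewood shortcut once $v_j$ has a vanishing coordinate. Your plan does not supply a substitute for it. You are right that the remainder $M(j)f$ needs a sublevel set estimate for $\nabla\tilde P_0$ (this is exactly how (\ref{22a}) is established in Section~5), but the decomposition parameter there is the multi-index $\bar k\in\mathbb N_{\star}^{\alpha}$ rather than a single scale $N$, and the argument in this case ultimately rests on the induction your plan omits.
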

From now, we shall show Theorem \ref{6a}. For this, we consider two cases (non-zero coordinate case or zero coordinate case).
\subsection{Non-Zero Coordinate Case}
First, we assume that $v_j=(m_{1}^{j},\cdots, m_{n}^{j})$ does not have a vanishing  coordinate. This means that $m_{i}^{j}\neq0$ for all $1\le i \le n$. We can decompose $\mathcal{M}(j) f(x)$ as
\begin{align*}
	& \mathcal{M}(j) f(x)= \sup _{\mathfrak{q} \in S(j)}\left|\int f(x-P(2^{-q_1}t_1,\cdots, 2^{-q_n}t_n))\eta(t_1)\cdots \eta(t_n) \mathrm{d}t_1 \cdots\mathrm{d} t_n\right| \\
	& \leq \sup _{\mathfrak{q} \in S(j)} \left| \int\left[f(x-P(2^{-q_1}t_1,\cdots, 2^{-q_n}t_n))-f\left(x-2^{-\mathfrak{q} \cdot v_j} a_{v_j} \mathfrak{t}^{v_j}\right)\right]\eta(t_1)\cdots \eta(t_n) d\mathfrak{t} \right| \\
	&+\sup _{\mathfrak{q} \in S(j)}\left|\int f\left(x-2^{-\mathfrak{q} \cdot v_j} a_{v_j} \mathfrak{t}^{v_j}\right) \eta(t_1)\cdots \eta(t_n)d\mathfrak{t}\right| \\
	&:=M(j) f(x)+Q(j) f(x) .
\end{align*}
One can apply Lemma \ref{7a} to $Q(j) f(x) $. So, it suffices to consider $M(j) f(x)$. We split $M(j) f(x)$ again as
\begin{align*}
	M(j) f(x)&\le\sum_{m=1}^{n} \sup _{\mathfrak{q} \in S_m(j)} \left| \int\left[f(x-P(2^{-q_1}t_1,\cdots, 2^{-q_n}t_n))-f\left(x-2^{-\mathfrak{q} \cdot v_j} a_{v_j} \mathfrak{t}^{v_j}\right)\right] \eta(t_1)\cdots \eta(t_n) d\mathfrak{t} \right|\\
	&:=\sum_{m=1}^{n}M_{m}(j) f(x).
\end{align*} We can write $P(2^{-q_1}t_1,\cdots, 2^{-q_n}t_n)=2^{-\mathfrak{q}\cdot v_j}\tilde{P}_j(t_1,\cdots,t_n)$ where
$
\tilde{P}_j(\mathfrak{t}):=\sum_{v=\mathfrak{m} \in \Lambda} 2^{-\mathfrak{q} \cdot\left(v-v_j\right)} a_{\mathfrak{m}} \mathfrak{t}^{\mathfrak{m}} .
$ Then, we have \begin{align*}
	M_m(j) f(x)= & \sup _{\mathfrak{q} \in S_m(j)} \mid \int\left[f(x-2^{-\mathfrak{q} \cdot v_j} \tilde{P}_j(\mathfrak{t}))-f(x-2^{-\mathfrak{q} \cdot v_j} a_{v_{j}} \mathfrak{t}^{v_j})\right] \eta(t_1)\cdots \eta(t_n) d\mathfrak{t}t \mid \\
	\leq & \sum_{N \geq 0} \sup _{\mathfrak{q} \in S_m^N(j)} \left| \int\left[f(x-2^{-\mathfrak{q} \cdot v_j} \tilde{P}_j(\mathfrak{t}))-f(x-2^{-\mathfrak{q} \cdot v_j} a_{v_{j}} \mathfrak{t}^{v_j})\right]\eta(t_1)\cdots \eta(t_n) d\mathfrak{t} \right|\\
	= & \sum_{N \geq 0} \sup _{\bar{k}_m \in \mathbb{N}_{\star}^{n-1}} \left| \int\left[f(x-c_N2^{-\sigma_m\cdot \bar{k}_{m}} \tilde{P}_j(\mathfrak{t}))-f(x-c_N2^{-\sigma_m\cdot \bar{k}_{m}} a_{v_{j}} \mathfrak{t}^{v_j})\right]\eta(t_1)\cdots \eta(t_n) d\mathfrak{t} \right|\\
	:= & \sum_{N \geq 0} M_j^N f(x)
\end{align*}
where $c_N:=2^{-(N/d_j)[(\bar{n}_{j}^{(1)}+\cdots+\bar{n}_{j}^{(n)})\cdot v_j]}$ and $$\sigma_m:=(\frac{1}{d}\bar{n}_{j}^{(1)}\cdot v_j,\cdots,\frac{1}{d}\bar{n}_{j}^{(m-1)}\cdot v_j,\frac{1}{d}\bar{n}_{j}^{(m+1)}\cdot v_j,\cdots,\frac{1}{d}\bar{n}_{j}^{(n)}\cdot v_j).$$
For $f \in S$, we let $\mu_{\bar{k}_{m}}^{N,(\bar{k}_{m})}, \nu_{\bar{k}_{m}}^{N,(\bar{k}_{m})}$, and $\nu^{N,(\bar{k}_{m})}$ denote the measures satisfying
$$
\begin{aligned}
	\left\langle f, \mu_{\bar{k}_{m}}^{N,(\bar{k}_{m})}\right\rangle & = \int\left[f(c_N2^{-\sigma_m\cdot \bar{k}_{m}} \tilde{P}_j(\mathfrak{t}))-f(c_N2^{-\sigma_m\cdot \bar{k}_{m}} a_{v_{j}} \mathfrak{t}^{v_j})\right]\eta(t_1)\cdots \eta(t_n) d\mathfrak{t}  \\
	\left\langle f, \nu_{\bar{k}_{m}}^{N,(\bar{k}_{m})}\right\rangle & = \int\left[f(2^{-\sigma_m\cdot \bar{k}_{m}} \tilde{P}_j(\mathfrak{t}))-f(2^{-\sigma_m\cdot \bar{k}_{m}} a_{v_{j}} \mathfrak{t}^{v_j})\right]\eta(t_1)\cdots \eta(t_n) d\mathfrak{t} \\
	\left\langle f, \nu^{N,(\bar{k}_{m})}\right\rangle & = \int\left[f(\tilde{P}_j(\mathfrak{t}))-f(a_{v_{j}} \mathfrak{t}^{v_j})\right]\eta(t_1)\cdots \eta(t_n) d\mathfrak{t}
\end{aligned}
$$  for those $\bar{k}_{m}$ 's for which
$$
\mathfrak{q}=\frac{N+k_{1}}{d_j}\bar{n}_{j}^{(1)}+\cdots+\frac{N+k_{m-1}}{d_j}\bar{n}_{j}^{(m-1)}+\frac{N}{d_j}\bar{n}_{j}^{(m)}+\frac{N+k_{m+1}}{d_j}\bar{n}_{j}^{(m+1)}+\cdots+\frac{N+k_{n}}{d_j}\bar{n}_{j}^{(n)}.
$$
For all the other $\bar{k}_{m}$ 's, we define $\mu_{\bar{k}_{m}}^{N,(\bar{k}_{m})}, \nu_{\bar{k}_{m}}^{N,(\bar{k}_{m})}$, and $\nu^{N,(\bar{k}_{m})}$ to be zero distributions. Then, we define $
M_j^N f(x)$ as $$
M_j^N f(x):=\sup _{\bar{k}_{m} \in \mathbb{N}_{\star}^{n-1}}\left|\mu_{\bar{k}_{m}}^{N,(\bar{k}_{m})} * f(x)\right| .
$$
Also, we define
$$
L_j^N f(x):=\sup _{\bar{k}_{m} \in \mathbb{N}_{\star}^{n-1}}\left|\nu_{\bar{k}_{m}}^{N,(\bar{k}_{m})} * f(x)\right|.
$$Note that $(M_j^N f)(x)=(L_j^N f_N)\left(x / c_N\right)$ where $f_N(x)=f\left(c_N x\right)$. So, we shall consider the weak-type 1-1 estimate for $L_j^N$.\\
Now, we introduce the following Lemma whoose proof is postponed in the section 4.
\begin{lemma}\label{8a}	There are are positive real constants $A,B,C, \delta_1$ and $\delta_2$ independent of $\bar{k}_{m}$ and  $N$ such that \begin{align}\label{9a}
		\int\left|\nu^{N,(\bar{k}_{m})}(x-y)-\nu^{N,(\bar{k}_{m})}(x)\right| \mathrm{d} x \leq A 2^{-\delta_1 N}|y|^{\delta_2} \text { for all } y \in \mathbb{R},
	\end{align}
	\begin{align}\label{10a}
		\sum_{\bar{k}_{m} \in \mathbb{N}_{\star}^{n-1}} \int_{|x| \geq 2|y|}\left|\nu_{\bar{k}_{m}}^{N,(\bar{k}_{m})}(x-y)-\nu_{\bar{k}_{m}}^{N,(\bar{k}_{m})}(x)\right| \mathrm{d} x \leq C 2^{-\delta_1 N},
	\end{align}
\begin{align}\label{11a}
\left\|\sup _{\bar{k}_{m} \in \mathbb{N}^{n-1}}\left|f * \nu_{\bar{k}_{m}}^{N,(\bar{k}_{m})}\right|\right\|_2 \leq B 2^{-\delta_3 N}\|f\|_2 .
\end{align}
\end{lemma}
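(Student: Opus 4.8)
The plan is to deduce $(\ref{9a})$, $(\ref{10a})$ and $(\ref{11a})$ from one structural fact: on the support of $\eta(t_1)\cdots\eta(t_n)$, i.e.\ on $[\tfrac12,4]^n$, the error $E(\mathfrak t):=\tilde P_j(\mathfrak t)-a_{v_j}\mathfrak t^{v_j}=\sum_{v\in\Lambda\setminus\{v_j\}}2^{-\mathfrak q\cdot(v-v_j)}a_{\mathfrak m}\mathfrak t^{\mathfrak m}$ is small in every $C^K$ norm, $\|E\|_{C^K([\frac12,4]^n)}\lesssim_{K,P}2^{-\beta_j N}$ by Lemma $\ref{1b}$ (only $\mathfrak q\in S_m^N(j)$ occur in the sup defining $M_j^N$), whereas the unperturbed monomial phase $a_{v_j}\mathfrak t^{v_j}$ has gradient bounded away from $0$ on $[\tfrac12,4]^n$ --- this last point is exactly where the hypothesis that $v_j$ has no vanishing coordinate is used. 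Rewriting $\langle f,\nu^{N,(\bar k_m)}\rangle=\int_0^1\!\int f'\!\left(a_{v_j}\mathfrak t^{v_j}+sE(\mathfrak t)\right)E(\mathfrak t)\,\eta(t_1)\cdots\eta(t_n)\,d\mathfrak t\,ds$ displays $\nu^{N,(\bar k_m)}=-\partial_x\Theta$ with $\Theta\in L^1$ supported in a fixed compact interval and $\|\Theta\|_{1}\lesssim 2^{-\beta_j N}$; in particular $\widehat{\nu^{N,(\bar k_m)}}(0)=0$.

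For $(\ref{9a})$ I would work on the Fourier side. From $|e^{ia}-e^{ib}|\le|a-b|$ one gets $|\widehat{\nu^{N,(\bar k_m)}}(\xi)|\lesssim 2^{-\beta_j N}|\xi|$. For $|\xi|\ge 1$, one writes the integrand as $e^{-i\xi a_{v_j}\mathfrak t^{v_j}}\big(e^{-i\xi E(\mathfrak t)}-1\big)\eta(t_1)\cdots\eta(t_n)$ and integrates by parts in a variable $t_i$ with $\partial_{t_i}(a_{v_j}\mathfrak t^{v_j})\gtrsim 1$ there; since, apart from the oscillation $e^{-i\xi a_{v_j}\mathfrak t^{v_j}}$, the only $\xi$-dependence sits in $e^{-i\xi E(\mathfrak t)}-1$ whose derivatives each cost only $|\xi|\,\|E\|_{C^1}\lesssim|\xi|2^{-\beta_j N}$, each step gains a factor $2^{-\beta_j N}$. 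Combining this with the ordinary non-stationary-phase decay of $\widehat{(a_{v_j}\mathfrak t^{v_j})_*(\eta^{\otimes n})}$ and of $\widehat{(\tilde P_j)_*(\eta^{\otimes n})}$ (the latter valid once $N$ exceeds a threshold depending only on $P$, the finitely many small $N$ being absorbed via the sublevel-set estimates for polynomials as in \cite{1}), and interpolating the two bounds, gives $|\widehat{\nu^{N,(\bar k_m)}}(\xi)|\lesssim 2^{-\delta N}(1+|\xi|)^{-2}$. Since $\nu^{N,(\bar k_m)}$ is supported in a fixed compact set uniformly in $N$ and $\bar k_m$, Plancherel together with Cauchy--Schwarz converts these two bounds into $\|\tau_y\nu^{N,(\bar k_m)}-\nu^{N,(\bar k_m)}\|_{L^1}\lesssim 2^{-\delta_1 N}|y|^{\delta_2}$, by splitting the $\xi$-integral of $|e^{-iy\xi}-1|^2\,|\widehat{\nu^{N,(\bar k_m)}}(\xi)|^2$ at $|\xi|\sim|y|^{-1}$, which is $(\ref{9a})$.

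For the dilates one uses $\nu_{\bar k_m}^{N,(\bar k_m)}(x)=\lambda^{-1}\nu^{N,(\bar k_m)}(\lambda^{-1}x)$ with $\lambda=\lambda(\bar k_m)=2^{-\sigma_m\cdot\bar k_m}\le 1$, where $\sigma_m$ has strictly positive entries because each $\bar n_j^{(l)}\cdot v_j>0$ (again the non-zero-coordinate hypothesis), so that $\widehat{\nu_{\bar k_m}^{N,(\bar k_m)}}(\xi)=\widehat{\nu^{N,(\bar k_m)}}(\lambda(\bar k_m)\xi)$. For $(\ref{11a})$ I would bound $\sup_{\bar k_m}|f*\nu_{\bar k_m}^{N,(\bar k_m)}|$ by $\big(\sum_{\bar k_m}|f*\nu_{\bar k_m}^{N,(\bar k_m)}|^2\big)^{1/2}$ and use Plancherel to reduce to $\sup_\xi\sum_{\bar k_m}|\widehat{\nu^{N,(\bar k_m)}}(\lambda(\bar k_m)\xi)|^2\lesssim 2^{-2\delta_3 N}$; for $(\ref{10a})$ I would rescale each summand and observe that, thanks to the fixed compact support and $(\ref{9a})$, the quantity $\int_{|x|\ge 2|y|}|\nu_{\bar k_m}^{N,(\bar k_m)}(x-y)-\nu_{\bar k_m}^{N,(\bar k_m)}(x)|\,dx$ vanishes unless $\lambda(\bar k_m)\gtrsim|y|$ and is $\lesssim 2^{-\delta_1 N}\min\!\big(1,(|y|/\lambda(\bar k_m))^{\delta_2}\big)$ otherwise. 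In both cases one is reduced to summing the single-scale estimates over $\bar k_m$, and this is the delicate point, and the step I expect to be the main obstacle: for $n\ge 3$ the dilation levels $\sigma_m\cdot\bar k_m$ are attained with multiplicity growing polynomially in the level, so a naive square-function / support argument loses a power of $\log(1/|y|)$. To sum uniformly in $N$ one must squeeze additional decay out of the coefficients $2^{-\mathfrak q\cdot(v-v_j)}$ of $E$ in the directions of $\bar k_m$, by partitioning $\Lambda\setminus\{v_j\}$ according to which faces of $\Pi$ through $v_j$ each monomial lies on and treating the corresponding pieces of $E$ separately --- effectively iterating the Newton-diagram decomposition inside each $S_m^N(j)$. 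Granting this face-refined bookkeeping, the estimates $(\ref{9a})$--$(\ref{11a})$ are precisely the input the Calder\'on--Zygmund theory for maximal operators requires: the linearized kernel satisfies a H\"ormander condition by $(\ref{10a})$, the $L^2$ bound comes from $(\ref{11a})$, and the sum over $N$ converges because of the factors $2^{-\delta N}$.
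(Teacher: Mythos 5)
Your argument follows the paper's proof nearly step for step. For (\ref{9a}) the paper establishes precisely your two Fourier bounds --- the mean-value estimate $|\widehat{\nu^{N,(\bar k_m)}}(\xi)|\lesssim 2^{-\beta_j N}|\xi|$ and, once $N$ exceeds a threshold depending only on $P$, the nonstationary-phase decay $|\widehat{\nu^{N,(\bar k_m)}}(\xi)|\le C_m|\xi|^{-m}$ coming from $|\nabla\tilde P_j|\gtrsim1$ on $[\tfrac12,4]^n$ --- and then converts them via Cauchy--Schwarz, compact support and Plancherel into (\ref{9a}); the finitely many small $N$ are handled by Proposition~7.2 of Christ--Nagel--Stein--Wainger, which is the sublevel-set route you allude to. For (\ref{10a}) it rescales, uses the shrinking support $B(0,2^{-\sigma_m\cdot\bar k_m}R)$, and applies (\ref{9a}); for (\ref{11a}) it dominates the supremum by the $\ell^2$ square function and uses Plancherel with the interpolated bound $|\widehat{\nu_{\bar k_m}^{N,(\bar k_m)}}(\xi)|\lesssim2^{-\delta_3 N}\min\bigl(|\xi|2^{-\sigma_m\cdot\bar k_m},(|\xi|2^{-\sigma_m\cdot\bar k_m})^{-\epsilon/2}\bigr)$. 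So in outline there is no divergence between your proposal and the paper.

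The obstacle you flag at the end, however, is genuine, and the paper does not resolve it either. In the proof of (\ref{10a}) the paper reduces to the sum $\sum_{\bar k_m:\ \sigma_m\cdot\bar k_m+k_0\le\log R+1}\bigl(2^{\sigma_m\cdot\bar k_m+k_0}\bigr)^{\delta_2}$ and asserts it is a constant $C(R,\sigma,\delta_2)$. When $n=2$, $\bar k_m$ is a single integer, each level $\sigma_m\cdot\bar k_m$ occurs with multiplicity $O(1)$, and the geometric series converges; that is the two-parameter setting of Patel. But for $n\ge3$ the number of $\bar k_m\in\mathbb N_\star^{n-1}$ with $\sigma_m\cdot\bar k_m\in[j,j+1)$ grows like $j^{n-2}$, so the displayed sum is of order $(\log R+1-k_0)^{n-2}\sim(\log(1/|y|))^{n-2}$, which is not uniformly bounded in $y$ and therefore cannot serve as the H\"ormander constant required in Proposition~\ref{160a}. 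The same multiplicity defeats the naive square-function step behind (\ref{11a}): for fixed $\xi$ there are about $(\log|\xi|)^{n-2}$ indices $\bar k_m$ for which $|\xi|2^{-\sigma_m\cdot\bar k_m}\sim1$ and $|\widehat{\nu_{\bar k_m}^{N,(\bar k_m)}}(\xi)|$ is comparable to its maximum, so $\sup_\xi\sum_{\bar k_m}|\widehat{\nu_{\bar k_m}^{N,(\bar k_m)}}(\xi)|^2$ diverges. Your remark that one must extract additional decay in $\bar k_m$ from the coefficients $2^{-\mathfrak q\cdot(v-v_j)}$ by a face-by-face refinement of the Newton-diagram decomposition is the right instinct; but that refinement is precisely what separates $n\ge3$ from the two-parameter case, and it is carried out neither in your proposal nor in the paper's proof of (\ref{10a}) and (\ref{11a}) as written.
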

We shall apply Lemma \ref{8a} to show the following Proposition utilizing the Calderón-Zygmund singular integral theory. The following Proposition implies that $M_m(j)f(x)$ has the weak type 1-1 boundedness. 
\begin{proposition}\label{160a}
	\begin{align}\label{16a}
		\left|\left\{x: L_j^N f(x)>\lambda\right\}\right| \leq \frac{C}{\lambda} 2^{-\delta N}\|f\|_1
	\end{align}
	where $C$ and $\delta$ are positive real constants independent of $N$ and $\lambda$. 
\end{proposition}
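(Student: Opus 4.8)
The plan is to run a Calderón--Zygmund decomposition of $f$, but \emph{at an elevated height}, which is the device that converts the $N$-decay supplied by Lemma~\ref{8a} into $N$-decay for the distribution function. Fix $N$ and $\lambda>0$, put $\delta:=\tfrac12\min(\delta_1,\delta_3)$, and decompose $f=g+b$ at height $\mu:=\lambda\,2^{\delta N}$ in the usual way: $b=\sum_i b_i$ with $\mathrm{supp}\,b_i\subset Q_i$ (a dyadic interval of centre $y_i$), $\int b_i=0$, $\|b_i\|_1\lesssim\mu|Q_i|$, $\sum_i|Q_i|\lesssim\|f\|_1/\mu$, while $\|g\|_\infty\lesssim\mu$ and $\|g\|_1\le\|f\|_1$, so $\|g\|_2^2\lesssim\mu\|f\|_1$. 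Since $L_j^N$ is sublinear it suffices to bound $|\{L_j^Ng>\lambda/2\}|$ and $|\{L_j^Nb>\lambda/2\}|$ by $\tfrac{C}{\lambda}2^{-\delta N}\|f\|_1$; moreover $\bigl|\bigcup_i 2Q_i\bigr|\lesssim\|f\|_1/\mu=2^{-\delta N}\|f\|_1/\lambda$ is already of the required form, so for the bad part we only have to control $L_j^Nb$ off $\bigcup_i 2Q_i$.

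For the good part I would invoke (\ref{11a}). Since $L_j^Ng=\sup_{\bar{k}_m}|g*\nu_{\bar{k}_m}^{N,(\bar{k}_m)}|$, Chebyshev together with (\ref{11a}) gives
$$
|\{L_j^Ng>\lambda/2\}|\le\frac{4}{\lambda^2}\|L_j^Ng\|_2^2\le\frac{4B^2\,2^{-2\delta_3 N}}{\lambda^2}\|g\|_2^2\lesssim\frac{2^{-2\delta_3 N}\mu}{\lambda^2}\|f\|_1=\frac{2^{(\delta-2\delta_3)N}}{\lambda}\|f\|_1\le\frac{2^{-\delta N}}{\lambda}\|f\|_1,
$$
the last step because $\delta\le\delta_3$. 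This is the only place the raised height could cost us, and the quadratic gain $2^{-2\delta_3 N}$ comfortably absorbs the linear loss $\mu/\lambda=2^{\delta N}$.

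For the bad part, fix $x\notin\bigcup_i 2Q_i$. Using $\int b_i=0$ we write, for each $\bar{k}_m$,
$$
\nu_{\bar{k}_m}^{N,(\bar{k}_m)}*b_i(x)=\int_{Q_i}\bigl[\nu_{\bar{k}_m}^{N,(\bar{k}_m)}(x-y)-\nu_{\bar{k}_m}^{N,(\bar{k}_m)}(x-y_i)\bigr]b_i(y)\,dy,
$$
and then, using $\sup_{\bar{k}_m}\le\sum_{\bar{k}_m}$ and $|b_i|\ge 0$,
$$
L_j^Nb(x)\le\sum_i\sup_{\bar{k}_m}\bigl|\nu_{\bar{k}_m}^{N,(\bar{k}_m)}*b_i(x)\bigr|\le\sum_i\int_{Q_i}|b_i(y)|\Bigl(\sum_{\bar{k}_m}\bigl|\nu_{\bar{k}_m}^{N,(\bar{k}_m)}(x-y)-\nu_{\bar{k}_m}^{N,(\bar{k}_m)}(x-y_i)\bigr|\Bigr)dy.
$$
For $y\in Q_i$ one has $(2Q_i)^c\subset\{x:|x-y_i|\ge 2|y-y_i|\}$ (enlarging the dilation factor if needed), so the substitution $u=x-y_i$, $h=y-y_i$ and (\ref{10a}) yield $\int_{(2Q_i)^c}\sum_{\bar{k}_m}|\nu_{\bar{k}_m}^{N,(\bar{k}_m)}(x-y)-\nu_{\bar{k}_m}^{N,(\bar{k}_m)}(x-y_i)|\,dx\le C\,2^{-\delta_1 N}$. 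Integrating the previous display over $x\in\bigl(\bigcup_i 2Q_i\bigr)^c$ and applying Fubini (everything is nonnegative) gives $\int_{(\bigcup_i 2Q_i)^c}L_j^Nb\le C\,2^{-\delta_1 N}\sum_i\|b_i\|_1\lesssim 2^{-\delta_1 N}\mu\sum_i|Q_i|\lesssim 2^{-\delta_1 N}\|f\|_1$, whence by Chebyshev $|\{x\notin\bigcup_i 2Q_i:L_j^Nb(x)>\lambda/2\}|\lesssim\tfrac{2^{-\delta_1 N}}{\lambda}\|f\|_1\le\tfrac{2^{-\delta N}}{\lambda}\|f\|_1$. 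Adding the three contributions proves (\ref{16a}).

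Rather than an obstacle, the crucial point is the right choice of the Calderón--Zygmund height $\mu=\lambda\,2^{\delta N}$: at the natural height $\lambda$ the exceptional set $\bigcup_i 2Q_i$ contributes $\lesssim\|f\|_1/\lambda$ with no $N$-gain, so the height must be raised, after which one checks compatibility with the $L^2$ bound (\ref{11a}) --- which forces $\delta\le\delta_3$ --- while the matching constraint $\delta\le\delta_1$ comes from the Hörmander-type estimate (\ref{10a}); note that (\ref{9a}) is not used directly here, only inside the proof of Lemma~\ref{8a}. The only genuinely ``maximal'' subtlety is the passage $\sup_{\bar{k}_m}\le\sum_{\bar{k}_m}$ in the bad-part estimate, which is lossless precisely because (\ref{10a}) already carries the sum over $\bar{k}_m$ inside. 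Finally, combining (\ref{16a}) with the scaling identity $M_j^Nf(x)=L_j^Nf_N(x/c_N)$, $f_N(x)=f(c_Nx)$ --- dilation invariant since $\|f_N\|_1=c_N^{-1}\|f\|_1$ and the sublevel set rescales by $c_N$ --- and then summing the geometric series $\sum_{N\ge 0}2^{-\delta N/2}$ (after splitting the threshold $\lambda$ among the $M_j^N$) shows that each $M_m(j)$, hence $M(j)$, and together with Lemma~\ref{7a} applied to $Q(j)$ also $\mathcal{M}(j)$, is of weak type $1$--$1$.
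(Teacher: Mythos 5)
Your proof is correct and follows essentially the same route as the paper: a Calderón--Zygmund decomposition of $f$ at the elevated height $2^{\varepsilon N}\lambda$ (the paper leaves $\varepsilon$ to be fixed, you make the concrete choice $\varepsilon=\delta=\tfrac12\min(\delta_1,\delta_3)$), with the good part controlled via Chebyshev and the $L^2$ estimate~(\ref{11a}), the bad part controlled off the dilated exceptional set via cancellation and the summed Hörmander estimate~(\ref{10a}), and the exceptional set itself absorbed by its measure bound. The bookkeeping and the resulting exponents match the paper's argument.
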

	
\begin{proof}
First, we shall introduce the  the Calderón-Zygmund decomposition briefly.	We choose $\varepsilon>0$ (to be fixed later) and decompose $\mathbb{R}$ as $\mathbb{R}=F \cup \Omega$ so that:\\
	(1) $F$ is closed and $F \cap \Omega=\varnothing$;\\
	(2) $f(x) \leq 2^{\varepsilon N} \lambda$ a.e. on $F$;\\
	(3) $\Omega$ is the union of intervals, $\Omega=\bigcup_{i=1}^{\infty} Q_i$, whose interiors are mutually disjoint. Moreover,
\begin{align}
		|\Omega| \leq \frac{C}{2^{\varepsilon N} \lambda} \int_{\mathbb{R}} f(x) \mathrm{d} x,\label{12a}\\
		\frac{1}{\left|Q_i\right|} \int_{Q_i} f(x) \mathrm{d} x \leq C 2^{\varepsilon N} \lambda .\label{13a}
\end{align}The constant $C$ in (\ref{12a}) and (\ref{13a}) depends only on the dimension of the space. We define a function $g$ almost everywhere by
	\begin{align}\label{14a}
			g(x)= \begin{cases}f(x) & \text { for } x \in F \\ \frac{1}{\left|Q_i\right|} \int_{Q_i} f(y) \mathrm{d} y, & \text { for } x \in Q_i^{\circ}\end{cases}.
	\end{align}
	If $f(x)=g(x)+b(x)$, then $b(x)=0$ for $x \in F$ and
	\begin{align}\label{15a}
		\int_{Q_i} b(x) \mathrm{d} x=0, \text { for each } Q_i.
	\end{align}
	Also, one knows that
	$$
	|\{x: L_j^N f(x)>\lambda\}| \leq|\{x: L_j^N g(x)>\frac{\lambda}{2}\}|+|\{x: L_j^N b(x)>\frac{\lambda}{2}\}|.
	$$
By (\ref{12a}), (\ref{13a}) and (\ref{14a}), we have
	$$
	\begin{aligned}
		\|g\|_2^2 & =\int_F|g(x)|^2 \mathrm{~d} x+\int_{\Omega}|g(x)|^2 \mathrm{~d} x \leq C 2^{\varepsilon N} \lambda\|f\|_1 .
	\end{aligned}
	$$Therefore, by Lemma \ref{11a}, we obtain that
	$$
	\begin{aligned}
		\left|\left\{x: L_j^N g(x)>\frac{\lambda}{2}\right\}\right| & \leq \frac{4}{\lambda^2} \int\left|L_j^N g\right|^2 \mathrm{~d} x 
		=\frac{4}{\lambda^2} \int\left(\sup _{\bar{k}_{m}\in \mathbb{N}_{\star}^{n-1}}\left| \nu_{\bar{k}_{m}}^{N,(\bar{k}_{m})} * g(x)\right|\right)^2 \mathrm{~d} x \\
		& \leq \frac{4}{\lambda^2} B^2 2^{-2 \delta_3 N}\|g\|_2^2  \leq \frac{C}{\lambda} 2^{-\left(2 \delta_3-\varepsilon\right) N}\|f\|_1 .
	\end{aligned}
	$$ Set
	$$
	b_i(x)= \begin{cases}b(x) & \text { if } x \in Q_i \\ 0 & \text { if } x \notin Q_i.\end{cases}
	$$
	Then, one has
	$$
	\begin{gathered}
		b(x)=\sum_i b_i(x) \ \text{and} \ 
		L_j^N b(x) \leq \sum_i L_j^N b_i(x) .
	\end{gathered}
	$$
	Let $Q_i^*$ denote the interval which has the same centre $y^i$ as $Q_i$, but which is expanded 2 times. Then $Q_i \subseteq Q_i^*$ and $\Omega \subseteq \Omega^*$ where $\Omega^*=\cup Q_i^*$. Moreover, we have:\\
	(a) $\left|\Omega^*\right| \leq 2|\Omega|$ and $F^* \subseteq F$ where $F^*=\left(\Omega^*\right)^c$,\\
	(b) If $x \notin Q_i^*$, then $\left|x-y^i\right| \geq 2\left|y-y^i\right|$ for all $y \in Q_i$.\\
 Now, by (\ref{15a}) we have
	$$
	\begin{aligned}
		L_j^N b_i(x) & =\sup _{\bar{k}_{m}\in \mathbb{N}_{\star}^{n-1}}\left|\int \nu_{\bar{k}_{m}}^{N,(\bar{k}_{m})}(x-y) b_i(y) \mathrm{d} y\right| \\
		& =\sup _{\bar{k}_{m}\in \mathbb{N}_{\star}^{n-1}}\left|\int_{Q_i} \nu_{\bar{k}_{m}}^{N,(\bar{k}_{m})}(x-y) b(y) \mathrm{d} y\right| \\
		& =\sup _{\bar{k}_{m}\in \mathbb{N}_{\star}^{n-1}}\left|\int_{Q_i}\left[\nu_{\bar{k}_{m}}^{N,(\bar{k}_{m})}(x-y)-\nu_{\bar{k}_{m}}^{N,(\bar{k}_{m})}\left(x-y^i\right)\right] b(y) \mathrm{d} y\right|.
	\end{aligned}
	$$ So, we know that
	$$
	\begin{aligned}
		\int_{F^*} & L_j^N b(x) \mathrm{d}x \\
		& \leq \sum_i \int_{x \notin Q_i^*} L_j^N b_i(x) \mathrm{d} x \\
		& \leq \sum_i \int_{x \notin Q_i^*} \int_{y \in Q_i} \sup _{\bar{k}_{m}\in \mathbb{N}_{\star}^{n-1}}\left|\nu_{\bar{k}_{m}}^{N,(\bar{k}_{m})}(x-y)-\nu_{\bar{k}_{m}}^{N,(\bar{k}_{m})}\left(x-y^i\right)\right||b(y)| \mathrm{d} y \mathrm{~d} x \\
		& \leq \sum_i \int_{y \in Q_i}\left\{\int_{x \notin Q_i^*} \sum_{\bar{k}_{m}\in \mathbb{N}_{\star}^{n-1}}\left|\nu_{\bar{k}_{m}}^{N,(\bar{k}_{m})}(x-y)-\nu_{\bar{k}_{m}}^{N,(\bar{k}_{m})}\left(x-y^i\right)\right| \mathrm{d} x\right\}|b(y)| \mathrm{d} y \\
		& \leq \sum_i \int_{y \in Q_i}\left\{\int_{\left|x^{\prime}\right| \geq 2\left|y^{\prime}\right|} \sum_{\bar{k}_{m}\in \mathbb{N}_{\star}^{n-1}}\left|\nu_{\bar{k}_{m}}^{N,(\bar{k}_{m})}\left(x^{\prime}-y^{\prime}\right)-\nu_{\bar{k}_{m}}^{N,(\bar{k}_{m})}\left(x^{\prime}\right)\right| \mathrm{d} x^{\prime}\right\}|b(y)| \mathrm{d} y
	\end{aligned}
	$$
	using (b) for $y \in Q_i$ if $x^{\prime}=x-y^i$ and $y^{\prime}=y-y^i$. Then, by (\ref{10a}) we obtain that \begin{align*}
		\int_{F^*} L_j^N b(x) \mathrm{d} x & \leq C 2^{-\delta_1 N} \sum_i \int_{y \in Q_i}|b(y)| \mathrm{d} y \\
		& \leq C 2^{-\delta_1 N}\left\{\sum_i \int_{y \in Q_i}|f(y)| \mathrm{d} y+\sum_i \int_{y \in Q_i}|g(y)| \mathrm{d} y\right\} \\
		& \leq C 2^{-\delta_1 N}\left\{\int_{\Omega}|f(y)| \mathrm{d} y+\sum_i \int_{y \in Q_i}\left\{\frac{1}{\left|Q_i\right|} \int_{Q_i}|f(z)| \mathrm{d} z\right\} \mathrm{d} y\right\} \\
		& \leq C 2^{-\delta_1 N}\left\{\|f\|_1+\int_{\Omega}|f(z)| \mathrm{d} z\right\} \\
		& \leq C 2^{-\delta_1 N}\|f\|_1 .
	\end{align*} Thus, we have
$$
\begin{aligned}
\left|\left\{x: L_j^N b(x)>\frac{\lambda}{2}\right\}\right| & \leq\left|\left\{x \in F^*: L_j^N b(x)>\frac{\lambda}{2}\right\}\right|+\left|\left\{x \in \Omega^*: L_j^N b(x)>\frac{\lambda}{2}\right\}\right| \\
& \leq \frac{2}{\lambda} \int_{F^*} L_j^N b(x) \mathrm{d} x+\left|\Omega^*\right| \\
& \leq C\left(\frac{1}{\lambda} 2^{-\delta_1 N}\|f\|_1+\frac{2^{-\varepsilon N}}{\lambda}\|f\|_1\right) \\
& \leq \frac{C}{\lambda} 2^{-\delta_4 N}\|f\|_1
\end{aligned}
$$
\end{proof}
\subsection{Zero Coordinate Case} Now, we shall treat the case where $v_j=(m_{1}^{j},\cdots, m_{n}^{j})$ has a vanishing coordinate. Assume that \begin{align*}
	\begin{cases}&m_{i}^{j}=0  \text { for } i \in A, \\ &m_{i}^{j}\neq0  \text { for } i \in B \end{cases}
\end{align*}  where $v_j=(m_{1}^{j},\cdots, m_{n}^{j})$. Also, suppose that \begin{align}\label{19a}
	A=\{a_1,a_2,\cdots,a_{\alpha}\}\ \text{and} \ B=\{b_1,b_2,\cdots,b_{\beta}\} \quad (A\cup B=\{1,2\cdots,n\}).\end{align}Now, we set $$\Lambda_0:=\{v=(m_1,\cdots,m_n): m_i=0 \quad \text{if} \quad i\in A\}.$$
Then, we split $\mathcal{M}(j) f(x)$ as
\begin{align*}
	& \mathcal{M}(j) f(x)= \sup _{\mathfrak{q} \in S(j)}\left|\int f(x-P(2^{-q_1}t_1,\cdots, 2^{-q_n}t_n))\eta(t_1)\cdots \eta(t_n) \mathrm{d}t_1 \cdots\mathrm{d} t_n\right| \\
	& \leq \sup _{\mathfrak{q} \in S(j)} \left| \int[f(x-P(2^{-q_1}t_1,\cdots, 2^{-q_n}t_n))-f(x-2^{-\mathfrak{q} \cdot v_j}\sum_{v\in \Lambda_0}2^{-\mathfrak{q} \cdot (v-v_j)} a_{v} \mathfrak{t}^{v})]\eta(t_1)\cdots \eta(t_n) d\mathfrak{t} \right| \\
	&+\sup _{\mathfrak{q} \in S(j)}\left|\int f(x-2^{-\mathfrak{q} \cdot v_j}\sum_{v\in \Lambda_0}2^{-\mathfrak{q} \cdot (v-v_j)} a_{v} \mathfrak{t}^{v})\eta(t_1)\cdots \eta(t_n) d\mathfrak{t} \right| \\
	&:=M(j) f(x)+Q(j) f(x).
\end{align*}

First, we shall consider $Q(j)f(x)$. In order to show that $Q(j)f(x)$ is weak-type 1-1 with a bound dependent on the coefficients, we shall use the induction argument. Assume that $n_0$-parameter maximal function operator $\mathcal{M}'$ is weak-type 1-1 with a bound dependent on the coefficients of $P_0(t_1,\cdots,t_{n_0})$ where $n_0< n$ and \begin{align}\label{18a}
\mathcal{M}' f(x)=\sup _{0<h_1,h_2,\cdots,h_{n_0}<1} \frac{1}{h_1h_2\cdots h_{n_0}}\left|\int_0^{h_1}\cdots \int_0^{h_{n_0}} f(x-P_0(t_1,\cdots,t_{n_0})) \mathrm{d}t_1\cdots \mathrm{d} t_{n_0}\right|.
\end{align} Then, by the following inequality: \begin{align}
Q(j) f(x)&=\sup _{\mathfrak{q} \in S(j)}\left|\int f(x-2^{-\mathfrak{q} \cdot v_j}\sum_{v\in \Lambda_0}2^{-\mathfrak{q} \cdot (v-v_j)} a_{v} \mathfrak{t}^{v})\eta(t_1)\cdots \eta(t_n) d\mathfrak{t} \right|\notag\\
&=\sup _{\mathfrak{q} \in S(j)}\left|\int f(x-\sum_{v\in \Lambda_0}2^{-\mathfrak{q} \cdot v} a_{v} \mathfrak{t}^{v})\eta(t_1)\cdots \eta(t_n) d\mathfrak{t} \right|\notag\\
&\le \int\left\{\sup _{\mathfrak{q} \in S(j)}\left|\int f(x-\sum_{v\in \Lambda_0}2^{-\mathfrak{q} \cdot v} a_{v} \mathfrak{t}^{v})\prod_{i\in B}\eta(t_i) dt_{i\in B} \right|\right\}\prod_{i\in A}\eta(t_i)dt_{i\in A},\label{17a}
\end{align} we can obtain that $Q(j)f(x)$ is also weak-type 1-1 with a bound dependent on the coefficients of $P$ by the assumption. This is because (\ref{17a}) can be controlled by the $n_0$-parameter maximal function that is mentioned in (\ref{18a}).\\
Now, we shall treat $M(j) f(x)$. We write $P\left(2^{-q_1} t_1,\cdots, 2^{-q_n} t_n\right)=2^{-\mathfrak{q} \cdot v_j} \tilde{P}(\mathfrak{t})$ where
$$
\tilde{P}(\mathfrak{t})=\sum_{v=\mathfrak{m} \in \Lambda} 2^{-\mathfrak{q} \cdot\left(v-v_j\right)} a_{\mathfrak{m}} \mathfrak{t}^{\mathfrak{m}}
$$
and set
$$
\tilde{P}_0(\mathfrak{t})=\sum_{v=\mathfrak{m} \in  \Lambda_0} 2^{-\mathfrak{q} \cdot\left(v-v_j\right)} a_{\mathfrak{m}} t^{\mathfrak{m}} .
$$Hence, we rewrite $M(j)f(x)$ as \begin{align}\label{20a}
	& M(j) f(x)\notag\\
	& =\sup _{\mathfrak{q} \in S(j)} | \int\left[f(x-2^{-\mathfrak{q} \cdot v_j} \tilde{P}(\mathfrak{t}))-f(x-2^{-\mathfrak{q} \cdot v_j} \tilde{P}_0(\mathfrak{t}))\right] \eta(t_1)\cdots \eta(t_n) d\mathfrak{t}.
\end{align}Note that for $v_j=(m_{1}^{j},\cdots, m_{n}^{j})$ and each $i\in A$ in (\ref{19a}), there exists non-negative vector $n_{j}^{(a_i)}$ which is parallel with the vector $i$-th axis. Then, let $n_{j}^{(b_1)},\cdots,n_{j}^{(b_{\beta})}$ denote the other normal non-negative vectors generated by the corner point $v_j$.\\
For $\bar{k}=(k_1,k_2,\cdots,k_{\alpha})\in \mathbb{N}_{\star}^{\alpha}$, we define $S^{\bar{k}}(j)$ as $$S^{\bar{k} }(j)=\left\{\mathfrak{q} \in S(j):\mathfrak{q}=\frac{k_1}{d} n_{j}^{(a_1)}+\cdots+\frac{k_{\alpha}}{d}n_{j}^{(a_{\alpha})}+\frac{\ell_1}{d} n_{j}^{(b_1)}+\cdots  +\frac{\ell_{\beta}}{d} n_{j}^{(b_{\beta})}; \bar{\ell}=(\ell_1,\cdots,\ell_{\beta}) \in \mathbb{N}_{\star}^{\beta}\right\}.$$ Then, (\ref{20a}) is bounded by \begin{align*}
	&\sum_{\bar{k}\in\mathbb{N}_{\star}^{\alpha}} \sup _{\mathfrak{q} \in S^{\bar{k}}(j)} \left| \int\left[f(x-2^{-\mathfrak{q} \cdot v_j} \tilde{P}(\mathfrak{t}))-f(x-2^{-\mathfrak{q} \cdot v_j} \tilde{P}_0(\mathfrak{t}))\right] \eta(t_1)\cdots \eta(t_n) d\mathfrak{t}\right|\\
	&:=\sum_{\bar{k}\in\mathbb{N}_{\star}^{\alpha}}M^{\bar{k}}(j)f(x).
\end{align*}
So, if we have the following Proposition, then one can obtain the weak type 1-1 bound of the operator $M(j)$.
\begin{proposition}There exists $\gamma'\in \mathbb{Q}_{+}^{\beta}$ such that
	\begin{align}\label{2d}
		\left|\left\{x: M^{\bar{k}}(j) f(x)>\lambda\right\}\right| \leq \frac{C}{\lambda} 2^{-\gamma' \cdot \bar{k}}\|f\|_1 
	\end{align}
	where $\gamma'$ and $C>0$ are independent of $\bar{k}$ and $\lambda$.
\end{proposition}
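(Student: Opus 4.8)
The plan is to carry out the argument of Proposition \ref{160a} in the present setting; the new features are that one subtracts the full ``$\Lambda_0$-part'' $\tilde P_0$ rather than a single monomial, and that all decay is recorded in the multi-index $\bar k\in\mathbb{N}_{\star}^{\alpha}$. \emph{First} I would rewrite $M^{\bar k}(j)f$ as a maximal operator against a family of measures. Since each $n_j^{(a_i)}$ is a multiple of the $a_i$-th coordinate vector and $v_j$ has vanishing $a_i$-th coordinate for $i\in A$, one has $n_j^{(a_i)}\cdot v_j=0$; hence for $\mathfrak{q}\in S^{\bar k}(j)$ the dilation $2^{-\mathfrak{q}\cdot v_j}$ depends only on the $B$-part $\bar\ell=(\ell_1,\dots,\ell_\beta)$ of $\mathfrak{q}$, say $2^{-\mathfrak{q}\cdot v_j}=c_{\bar\ell}$. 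With $\tilde P,\tilde P_0$ as in \eqref{20a} this gives
$$
M^{\bar k}(j)f(x)=\sup_{\bar\ell\in\mathbb{N}_{\star}^{\beta}}\left|\mu^{\bar k}_{\bar\ell}*f(x)\right|,\qquad \langle g,\mu^{\bar k}_{\bar\ell}\rangle=\int\bigl[g(c_{\bar\ell}\tilde P(\mathfrak{t}))-g(c_{\bar\ell}\tilde P_0(\mathfrak{t}))\bigr]\eta(t_1)\cdots\eta(t_n)\,d\mathfrak{t}.
$$
The gain comes from $\tilde P-\tilde P_0=\sum_{v\in\Lambda\setminus\Lambda_0}2^{-\mathfrak{q}\cdot(v-v_j)}a_v\mathfrak{t}^{v}$: for $v\notin\Lambda_0$ some $A$-coordinate of $v$ is positive, so pairing against the axis-parallel normals and using the convexity relations \eqref{1a} gives $\mathfrak{q}\cdot(v-v_j)\ge\gamma\cdot\bar k$ for all $\mathfrak{q}\in S^{\bar k}(j)$ with a fixed $\gamma$ --- the analogue of Lemma \ref{1b}, decay now being in $\bar k$. (It is convenient to reduce first to the case that $P$ involves every variable, a missing variable contributing only a harmless average; this is what forces $\gamma$ to be strictly positive in all $\alpha$ components, and hence the final sum over $\bar k$ to converge.)

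\emph{Next} I would isolate the kernel estimates that play here the role of Lemma \ref{8a} --- ``the other Lemma'', to be proved in Section 5. Writing $\nu^{\bar k}$ for the $c_{\bar\ell}=1$, $\bar\ell=0$ normalization of $\mu^{\bar k}_{\bar\ell}$, the claims are that there exist $A,B,C>0$, $\delta_2>0$ and $\delta_1,\delta_3\in\mathbb{Q}_{+}^{\alpha}$, independent of $\bar k$ and $\bar\ell$, with
$$
\int\bigl|\nu^{\bar k}(x-y)-\nu^{\bar k}(x)\bigr|\,dx\le A\,2^{-\delta_1\cdot\bar k}|y|^{\delta_2},\qquad \sum_{\bar\ell\in\mathbb{N}_{\star}^{\beta}}\int_{|x|\ge 2|y|}\bigl|\mu^{\bar k}_{\bar\ell}(x-y)-\mu^{\bar k}_{\bar\ell}(x)\bigr|\,dx\le C\,2^{-\delta_1\cdot\bar k},
$$
$$
\Bigl\|\sup_{\bar\ell\in\mathbb{N}_{\star}^{\beta}}\bigl|f*\mu^{\bar k}_{\bar\ell}\bigr|\Bigr\|_{2}\le B\,2^{-\delta_3\cdot\bar k}\|f\|_{2}.
$$
The first is a Hölder-type $L^1$ smoothness bound for the averaged kernel; differencing it against the translates in $\mathfrak{q}$ produces the Hörmander-type second estimate; the third is an $L^2$ maximal inequality, to be obtained from Plancherel together with the standard square-function control of a maximal operator over the one-parameter family of dilations $\{c_{\bar\ell}\}$.

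\emph{Granting these}, \eqref{2d} is the proof of Proposition \ref{160a} copied line by line, with $N$ replaced by $\bar k$ and scalar exponents by inner products. Fix $\bar k$ and pick $\varepsilon\in\mathbb{Q}_{+}^{\alpha}$ with $\varepsilon<2\delta_3$ componentwise; run the Calderón--Zygmund decomposition of $f$ at height $2^{\varepsilon\cdot\bar k}\lambda$, so $f=g+b$ with $\Omega=\bigcup_i Q_i$, $|\Omega|\le C(2^{\varepsilon\cdot\bar k}\lambda)^{-1}\|f\|_1$, $\|g\|_2^2\le C\,2^{\varepsilon\cdot\bar k}\lambda\|f\|_1$ and $\int_{Q_i}b=0$. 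For the good part, Chebyshev and the $L^2$ maximal bound give $|\{M^{\bar k}(j)g>\lambda/2\}|\le 4\lambda^{-2}B^2 2^{-2\delta_3\cdot\bar k}\|g\|_2^2\le C\lambda^{-1}2^{-(2\delta_3-\varepsilon)\cdot\bar k}\|f\|_1$. For the bad part, $b=\sum_i b_i$, the vanishing mean on each $Q_i$, the enlargement $Q_i\subset Q_i^{*}$ and the Hörmander-type estimate yield $\int_{(\Omega^{*})^{c}}M^{\bar k}(j)b\le C\,2^{-\delta_1\cdot\bar k}\|f\|_1$, whence $|\{M^{\bar k}(j)b>\lambda/2\}|\le C\lambda^{-1}\bigl(2^{-\delta_1\cdot\bar k}+2^{-\varepsilon\cdot\bar k}\bigr)\|f\|_1$ after adding $|\Omega^{*}|\le 2|\Omega|$. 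Taking $\gamma'$ to be the componentwise minimum of $2\delta_3-\varepsilon$, $\delta_1$ and $\varepsilon$ gives \eqref{2d}. Summing over $\bar k\in\mathbb{N}_{\star}^{\alpha}$ with weights $2^{-\gamma'\cdot\bar k/2}$ (a finite sum) then shows $M(j)$ is weak-type $1$--$1$, and combined with the inductive treatment of $Q(j)$ through \eqref{17a} this proves Theorem \ref{6a} in the zero-coordinate case --- which, with the non-zero coordinate case and the base of the induction, yields Theorem \ref{5a}.

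I expect essentially all of the work to sit in Section 5, and the single hardest point to be the $L^2$ maximal bound with the \emph{quantitative} decay $2^{-\delta_3\cdot\bar k}$ uniform in $\bar\ell$: this is where the Newton-diagram geometry must be exploited in full, via sublevel-set / van der Corput estimates for $\tilde P$ measured against $\tilde P_0$, via the fact that each term of $\tilde P-\tilde P_0$ carries a factor $2^{-\gamma\cdot\bar k}$ with $\gamma$ positive in every component (the payoff of choosing the axis-parallel normals and of the preliminary reduction), and via careful control of the $\bar\ell$-summation so that the Hörmander-type constant does not absorb any growth in $\bar k$. The smoothness estimate, the passage to the Hörmander condition, and the Calderón--Zygmund argument above are routine by comparison.
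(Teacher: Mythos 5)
Your proposal follows the paper's own argument for this Proposition step by step: the observation that $\mathfrak{q}\cdot v_j$ depends only on the $B$-part $\bar{\ell}$ of $\mathfrak{q}$ (since $\bar{n}_j^{(a_i)}\cdot v_j=0$); the rewriting of $M^{\bar{k}}(j)f$ as $\sup_{\bar{\ell}\in\mathbb{N}_{\star}^{\beta}}|\nu_{\bar{\ell}}^{\bar{k},(\bar{\ell})}*f|$; the three kernel estimates, which are precisely Lemma \ref{21a}; and the Calder\'on--Zygmund decomposition at height $2^{\varepsilon\cdot\bar{k}}\lambda$, which the paper dispatches by saying the rest ``is the same as the proof of Proposition \ref{160a}.'' You even implicitly repair a typo by taking $\varepsilon$ and the resulting exponent in $\mathbb{Q}_{+}^{\alpha}$ rather than $\mathbb{Q}_{+}^{\beta}$, as they must pair with $\bar{k}\in\mathbb{N}_{\star}^{\alpha}$.

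However, the decay claim you insert as justification --- that after reducing to the case where $P$ involves every variable, pairing against the axis-parallel normals gives $\mathfrak{q}\cdot(v-v_j)\geq\gamma\cdot\bar{k}$ for a fixed $\gamma\in\mathbb{Q}_{+}^{\alpha}$ with \emph{all} components positive --- is not correct, and the reduction you propose does not rescue it. Since $\bar{n}_j^{(a_i)}$ is parallel to $e_{a_i}$ and $m_{a_i}^{j}=0$, one has $\bar{n}_j^{(a_i)}\cdot(v-v_j)>0$ precisely when the $a_i$-th coordinate of $v$ is positive; a point $v\in\Lambda\setminus\Lambda_0$ need only have \emph{one} positive $A$-coordinate, not all of them, so $\gamma_j^i:=\frac{1}{d}\min_{v\in\Lambda\setminus\Lambda_0}\bar{n}_j^{(a_i)}\cdot(v-v_j)$ can vanish. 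A concrete instance already using every variable is $P(t_1,t_2,t_3)=t_3+t_1t_3+t_2^2$ at the vertex $v_j=(0,0,1)$: here $A=\{1,2\}$, $\Lambda\setminus\Lambda_0=\{(1,0,1),(0,2,0)\}$, the term $t_1t_3$ pairs to zero with $e_2$ and $t_2^2$ pairs to zero with $e_1$, so $\gamma_j=(0,0)$; moreover for $k_1=0$ the residual $\tilde{P}-\tilde{P}_0=a_{(1,0,1)}t_1t_3+2^{-2k_2/d}a_{(0,2,0)}t_2^2$ does not decay as $k_2\to\infty$, so the weak-type norm of $M^{(0,k_2)}(j)$ cannot carry a factor $2^{-\gamma'_2 k_2}$ with $\gamma'_2>0$. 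The identical assertion appears in the paper just before \eqref{19c} and underlies Lemma \ref{21a}, so this is a gap you have inherited rather than introduced, but it is genuine: without positivity of $\gamma$ in every component the sum over $\bar{k}\in\mathbb{N}_{\star}^{\alpha}$ in your last paragraph diverges and the reduction to $M(j)$ collapses. A repair would seem to need a finer splitting than $\Lambda_0$ versus $\Lambda\setminus\Lambda_0$ --- for instance a telescoping over the subsets of $A$-coordinates of $v$ that vanish, or extracting the minimum $A$-coefficient of $\mathfrak{q}$ in the style of Lemma \ref{1b} --- rather than a single linear gain in $\bar{k}$.
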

\begin{proof} 
	Observe that for $\mathfrak{q} \in S^k(j)$,
	$$
	\begin{aligned}
		\mathfrak{q} \cdot v_j & =(\frac{\ell_1}{d} n_{j}^{(b_1)}+\cdots  +\frac{\ell_{\beta}}{d} n_{j}^{(b_{\beta})})\cdot v_{j}
	\end{aligned}
	$$
	since $\bar{n}_{j}^{(a_i)} \cdot v_j=0$ for $i\in A$. We set
	$$
	\sigma=\left(\frac{1}{d}(\bar{n}_{j}^{(b_1)} \cdot v_j),\cdots, \frac{1}{d}(\bar{n}_{j}^{(b_{\beta})} \cdot v_j)\right).
	$$ One can easily check that $\sigma$ is not a zero vector. Then, we rewrite $M^{\bar{k}}(j) f(x)$ with $\bar{\ell}=(\ell_1,\cdots,\ell_{\beta})$  as $$
	\begin{aligned}
		M^{\bar{k}}(j) f(x)= & \sup _{\bar{\ell} \in\mathbb{N}_{\star}^{\beta}} \left|\int\left[f(x-2^{-\sigma \cdot \bar{\ell}} \tilde{P}(\mathfrak{t}))-f(x-2^{-\sigma \cdot \bar{\ell}} \tilde{P}_0(\mathfrak{t}))\right] \eta(t_1)\cdots \eta(t_n) d\mathfrak{t}\right|\\
		&:=\sup _{\bar{\ell} \in\mathbb{N}_{\star}^{\beta}}\left| \nu_{\bar{\ell}}^{\bar{k},(\bar{\ell})} * f(x)\right|
	\end{aligned}
	$$
	where $v_{\bar{\ell}}^{\bar{k},(\bar{\ell})}$ and $v^{\bar{k},(\bar{\ell})}$ are the measures defined by
	$$
	\begin{aligned}
		\left\langle f, v_{\bar{\ell}}^{\bar{k},(\bar{\ell})}\right\rangle & =\int\left[f(2^{-\sigma \cdot \bar{\ell}} \tilde{P}(\mathfrak{t}))-f(2^{-\sigma\cdot \bar{\ell}} \tilde{P}_0(\mathfrak{t}))\right]\eta(t_1)\cdots \eta(t_n) d\mathfrak{t} \\
		\left\langle f, v^{\bar{k},(\bar{\ell})}\right\rangle & =\int\left[f(\tilde{P}(\mathfrak{t}))-f(\tilde{P}_0(\mathfrak{t}))\right] \eta(t_1)\cdots \eta(t_n) d\mathfrak{t}
	\end{aligned}
	$$
	for those $\bar{\ell}$ 's for which
	$$
	\frac{k_1}{d} n_{j}^{(a_1)}+\cdots+\frac{k_{\alpha}}{d}n_{j}^{(a_{\alpha})}+\frac{\ell_1}{d} n_{j}^{(b_1)}+\cdots  +\frac{\ell_{\beta}}{d} n_{j}^{(b_{\beta})}= \mathfrak{q}\in S^{\bar{k}}(j)
	$$
	and for all the other $\bar{\ell}$ 's we define them to be zero distributions.
	So, the proof is same as proof of Proposition \ref{160a} if we have the following lemmas.
\end{proof}

\begin{lemma}\label{21a}There are positive real constant $C,\delta$ and  vector $\gamma\in \mathbb{Q}_{+}^{\beta}$ independent of $\bar{k}$ and  $\bar{\ell}$ such that \begin{align}\label{22a}
		\int\left|v^{\bar{k},(\bar{\ell})}(x-y)-v^{\bar{k},(\bar{\ell})}(x)\right| \mathrm{d} x \leq C 2^{-\gamma\cdot \bar{k}}|y|^{\delta} \text { for all } y \in \mathbb{R},
	\end{align}
	\begin{align}\label{23a}
		\sum_{\bar{\ell} \in \mathbb{N}_{\star}^{\beta}} \int_{|x| \geq 2|y|}\left|v_{\bar{\ell}}^{\bar{k},(\bar{\ell})}(x-y)-v_{\bar{\ell}}^{\bar{k},(\bar{\ell})}(x)\right| \mathrm{d} x \leq C 2^{-\gamma\cdot k},
	\end{align}
	\begin{align}\label{24a}
		\left\|\sup _{\bar{\ell} \in \mathbb{N}_{\star}^{\beta}}\left|f * v_{\bar{\ell}}^{\bar{k},(\bar{\ell})}\right|\right\|_2 \leq C 2^{-\gamma\cdot k}\|f\|_2 .
	\end{align}
\end{lemma}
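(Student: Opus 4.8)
\textbf{Proof proposal for Lemma \ref{21a}.}

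The plan is to establish the three estimates \eqref{22a}, \eqref{23a}, \eqref{24a} by the same circle of ideas used for Lemma \ref{8a} in the non-zero coordinate case, but now organized around the splitting $\tilde P(\mathfrak t) - \tilde P_0(\mathfrak t) = \sum_{v \in \Lambda \setminus \Lambda_0} 2^{-\mathfrak q \cdot (v - v_j)} a_{\mathfrak m}\mathfrak t^{\mathfrak m}$, which is the ``error'' that survives when we pass from the full dilated polynomial to its $\Lambda_0$-part. The first point to record is a gain of the form $\mathfrak q \cdot (v - v_j) \ge \gamma \cdot \bar k$ for a fixed $\gamma \in \mathbb{Q}_+^\beta$, uniformly in $\bar\ell$, valid for every $v \in \Lambda \setminus \Lambda_0$: since any $v \notin \Lambda_0$ has $m_i > 0$ for some $i \in A$, the $i$-th axis normal $\bar n_j^{(a_i)}$ contributes $\bar n_j^{(a_i)} \cdot (v - v_j) > 0$, and because $\mathfrak q$ contains the summand $\tfrac{k_i}{d} \bar n_j^{(a_i)}$ this forces a linear-in-$\bar k$ lower bound; taking the minimum over the finitely many $v \in \Lambda \setminus \Lambda_0$ gives $\gamma$. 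This is the analogue of Lemma \ref{1b} and is the mechanism that produces the decay $2^{-\gamma \cdot \bar k}$ in all three inequalities.

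For \eqref{22a} I would argue as in Section 4: write $v^{\bar k,(\bar\ell)} \ast f = \int [f(\tilde P(\mathfrak t)) - f(\tilde P_0(\mathfrak t))] \prod \eta(t_i)\, d\mathfrak t$, use the fundamental theorem of calculus in the parameter that measures the size of $\tilde P - \tilde P_0$, and bound the $L^1$ modulus of continuity of the resulting kernel. The coefficient $2^{-\mathfrak q \cdot (v-v_j)}$ of each surviving monomial is $\le 2^{-\gamma \cdot \bar k}$ by the previous paragraph, which pulls the factor $2^{-\gamma \cdot \bar k}$ outside; the Hölder exponent $\delta$ and the $|y|^\delta$ come from interpolating the trivial $L^1$ bound on $\|v^{\bar k,(\bar\ell)}\|$ against a derivative bound, exactly as in the one-parameter Carbery–Ricci–Wright argument. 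The summed smoothness estimate \eqref{23a} then follows by combining \eqref{22a} with the rescaling $v_{\bar\ell}^{\bar k,(\bar\ell)}$ versus $v^{\bar k,(\bar\ell)}$: the dilation by $2^{-\sigma \cdot \bar\ell}$ with $\sigma$ a fixed nonzero nonnegative vector makes $\sum_{\bar\ell}$ a convergent geometric-type sum once one restricts to $|x| \ge 2|y|$, where the extra decay in $|y|/|x|$ supplied by \eqref{22a} beats the number of $\bar\ell$'s at each dyadic scale; this is the standard ``sum the Hörmander condition over dilates'' step.

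The main obstacle is the $L^2$ square-function bound \eqref{24a}. Here one must take the Fourier transform of $v_{\bar\ell}^{\bar k,(\bar\ell)}$ and show a uniform oscillatory-integral decay in the frequency variable together with the $2^{-\gamma \cdot \bar k}$ gain, then sum the maximal function over $\bar\ell \in \mathbb{N}_\star^\beta$ — a genuinely multi-parameter supremum. I expect to need van der Corput / sublevel-set estimates (the sublevel set input flagged for Section 5) to control $\widehat{v_{\bar\ell}^{\bar k,(\bar\ell)}}(\xi)$ by $\min(1, (2^{?}|\xi|)^{-\epsilon})$ uniformly, dominate $\sup_{\bar\ell}|f \ast v_{\bar\ell}^{\bar k,(\bar\ell)}|$ by an $\ell^2$-sum over $\bar\ell$ of the individual pieces (using that the difference $f(\tilde P) - f(\tilde P_0)$ already vanishes to positive order, so each piece is itself small), and apply Plancherel. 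The delicate bookkeeping is making the frequency-decay rate and the $\bar k$-decay rate interact so that the multi-parameter sum over $\bar\ell$ converges while a fixed power $2^{-\gamma \cdot \bar k}$ is retained; this is where the geometry of the Newton diagram — specifically that $\sigma \ne 0$ and that the surviving monomials are bounded away from $\Lambda_0$ — has to be used carefully rather than just invoked.
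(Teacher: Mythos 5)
You correctly identify the central decay mechanism: for $v \in \Lambda \setminus \Lambda_0$ and $\mathfrak{q} \in S^{\bar{k}}(j)$, every surviving monomial carries the factor $2^{-\mathfrak{q}\cdot(v-v_j)} \le 2^{-\gamma_j\cdot\bar{k}}$, and minimizing $\bar{n}_j^{(a_i)}\cdot(v-v_j)$ over the finitely many $v \notin \Lambda_0$ gives a fixed $\gamma \in \mathbb{Q}_+^\alpha$ (note the paper's $\gamma_j$ lives in $\mathbb{Q}_+^\alpha$, paired with $\bar{k}\in\mathbb{N}_\star^\alpha$, not $\mathbb{Q}_+^\beta$). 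Your identification of \eqref{23a} as a routine rescaled sum of \eqref{22a} over $\bar{\ell}$ also matches the paper.

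However, your sketch of \eqref{22a} misses the step that genuinely distinguishes the vanishing-coordinate case from Section 4, and this is a real gap. In Section 4 the reference phase is the single monomial $a_{v_j}\mathfrak{t}^{v_j}$ with all exponents positive, so $\nabla(a_{v_j}\mathfrak{t}^{v_j})$ is bounded away from zero on $[1/2,4]^n$ and one can integrate by parts once $N$ is large. Here the reference phase $\tilde{P}_0$ is a \emph{general} polynomial in the $t_i$, $i\in B$, and $\nabla\tilde{P}_0$ can vanish inside the support of $\eta(t_1)\cdots\eta(t_n)$. Saying ``argue as in Section 4'' and ``interpolate the trivial $L^1$ bound against a derivative bound'' does not account for this; the oscillatory decay in $\xi$ fails precisely where $\nabla\tilde{P}_0$ is small. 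The paper's new idea — which you do not supply — is a decomposition of the $\mathfrak{t}$-domain into
$I_\theta = \{\mathfrak{t}\in[1/2,4]^n : |\nabla\tilde{P}_0(\mathfrak{t})| > 2^{-\theta(\gamma_j\cdot\bar{k})}\}$
and its complement, for a small fixed $\theta$. On $I_\theta$ the lower bound on $|\nabla\tilde{P}_0|$ dominates the perturbation $2^{-\gamma_j\cdot\bar{k}}\tilde{Q}$ (using $\|\nabla\tilde{Q}\|_\infty\le M$) once $\bar{k}$ is large, so integration by parts gives $|\widehat{v_{I_\theta}^{\bar{k},(\bar{\ell})}}(\xi)| \le C_m|\xi|^{-m}$; combined with the mean-value bound $C|\xi|2^{-\gamma_j\cdot\bar{k}}$ and Plancherel this yields $2^{-\gamma\cdot\bar{k}}|y|^{\delta}$. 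On $I_\theta^c$ a polynomial sublevel-set estimate gives $|I_\theta^c| \lesssim 2^{-\theta\delta'(\gamma_j\cdot\bar{k})}$, so the total mass of $v_{I_\theta^c}^{\bar{k},(\bar{\ell})}$ is already exponentially small in $\bar{k}$, and interpolation with the uniform CNSW Hölder bound closes the estimate. You do invoke sublevel-set estimates, but you attach them to \eqref{24a}; in fact the paper treats \eqref{23a} and \eqref{24a} as verbatim repeats of \eqref{10a} and \eqref{11a}, and the sublevel-set machinery lives entirely in the proof of \eqref{22a}. Without the $I_\theta$/$I_\theta^c$ split, your argument for \eqref{22a} does not go through.
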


\section{Proof Of Lemma \ref{8a}}
In this section, we shall show  Lemma \ref{8a}.
\subsection{Proof of (\ref{9a})} Recall that we write $P(2^{-q_1}t_1,\cdots, 2^{-q_n}t_n)=2^{-\mathfrak{q}\cdot v_j}\tilde{P}_j(t_1,\cdots,t_n)$ where
$$
\tilde{P}_j(\mathfrak{t}):=\sum_{v=\mathfrak{m} \in \Lambda} 2^{-\mathfrak{q} \cdot\left(v-v_j\right)} a_{\mathfrak{m}} \mathfrak{t}^{\mathfrak{m}}=a_{v_j}\mathfrak{t}^{v_j}+\sum_{v=\mathfrak{m} \in \Lambda\setminus{v_j}} 2^{-\mathfrak{q} \cdot\left(v-v_j\right)} a_{\mathfrak{m}} \mathfrak{t}^{\mathfrak{m}}.
$$ Since $\mathfrak{q}\cdot (v-v_j) \ge 0$ for any $\mathfrak{q}\in S(j)$, $\tilde{P}_j(\mathfrak{t})$ has uniformly bounded $\mathcal{C}_{m}$ norm for all $\mathfrak{q}\in S(j)$. Moreover, by the term $a_{v_j}\mathfrak{t}^{v_j}$ of   $\tilde{P}_j(\mathfrak{t})$, one can knows that $\partial^\alpha \tilde{P}_j(\mathfrak{t})$ is uniformly bounded below for some $\alpha$ with $1 \leq|\alpha| \leq \operatorname{deg} \tilde{P}_j$. $\tilde{P}_0(\mathfrak{t})$ also have two same properties. Hence, by Proposition 7.2 of \cite{4} combined with the above two properties of $\tilde{P}_j(\mathfrak{t})$ and $\tilde{P}_0(\mathfrak{t})$, we have \begin{align}\label{5d}
\int\left|\nu^{N,(\bar{k}_{m})}(x-y)-\nu^{N,(\bar{k}_{m})}(x)\right| \mathrm{d} x \leq C |y|^{\epsilon_0}
\end{align}for all $y \in \mathbb{R}$ with $C,\epsilon_0>0$ independent of $\bar{k}_m$ and $N$.

Now, we consider the case where $N$ is sufficiently large. By Lemma \ref{1b}, when $N$ is sufficiently large,  $\nabla \tilde{P}_j(\mathfrak{t})$ is uniformly bounded below within the support of $ \eta(t_1)\cdots \eta(t_n)$ . Hence, one has \begin{align}\label{5c}
	& \left|\widehat{ \nu^{N,(\bar{k}_{m})}}(\xi)\right|=\left|\int\left[\exp (i \xi \tilde{P}_j(\mathfrak{t}))-\exp \left(i \xi a_{v_j}  \mathfrak{t}^{v_j}\right)\right]\eta(t_1)\cdots \eta(t_n) d\mathfrak{t}\right| \leq \frac{C_m}{|\xi|^m}
\end{align} for all  $m \in \mathbb{N}$ and  $N \geq N_0$. Here, $N_0$ depends only on the coefficients of polynomial $P$.  Also, by the mean value theorem, we have \begin{align}\label{6c}
\left|\widehat{ \nu^{N,(\bar{k}_{m})}}(\xi)\right|\le C2^{-\beta_jN}|\xi|
\end{align}  where $\beta_j$ is defined in (\ref{2b}). Thus, by (\ref{5c}) and (\ref{6c}) combined with the convexity, there exists $\sigma_j>0$ such that \begin{align}
& \int\left| \nu^{N,(\bar{k}_{m})}(x-y)- \nu^{N,(\bar{k}_{m})}(x)\right| \mathrm{d} x \notag\\
&\le C\left\{\int\left|(e^{-2 \pi i y \xi}-1) \cdot \widehat{\nu^{N,(\bar{k}_{m})}}(\xi)\right|^2 \mathrm{~d} \xi\right\}^{1 / 2} \label{7c}\\
&=C\left\{\int_{|\xi| \leq 1}\left|(e^{-2 \pi i y \xi}-1) \cdot \widehat{\nu^{N,(\bar{k}_{m})}}(\xi)\right|^2 \mathrm{~d} \xi+\int_{|\xi| \geq 1}\left|(e^{-2 \pi i y \xi}-1) \cdot \widehat{\nu^{N,(\bar{k}_{m})}}(\xi)\right|^2 \mathrm{~d} \xi\right\}^{1 / 2}\notag \\
& \leq C 2^{-\sigma_j N}|y| .\label{8c}
\end{align} Here, we have used the Cauchy-Schwarz inequality with  the fact that $ v^{N,(\bar{k}_{m})}$ has a compact support and Plancherel's theorem to have (\ref{7c}).
\subsection{Proof of (\ref{10a})}
 Suppose that the uniform compact support of the $\nu^{N,(\bar{k}_{m})}$ (for all $N$ and $\bar{k}_{m}$) is contained in a ball $B(0, R)$. Since
$$
\nu_{\bar{k}_{m}}^{N,(\bar{k}_{m})}(x)=2^{\sigma_m\cdot \bar{k}_{m}} \nu^{N,(\bar{k}_{m})}\left(2^{\sigma_m\cdot \bar{k}_{m}} x\right),
$$
the support of $v_k^{N,(k)}$ is contained in $B(0, 2^{-\sigma_m\cdot \bar{k}_{m}} R)$. Let  $2^{k_0-1}<|y| \leq 2^{k_0}$ for some $k_0 \in \mathbb{Z}$. Then, $|x| \geq 2|y|$ means that
$$
|x|>2^{k_0} \text { and }|x-y| \geq|y|>2^{k_0-1}.
$$ Hence, if $2^{-\sigma_m\cdot \bar{k}_{m}} R < 2^{k_0-1}$, $$\int_{|x| \geq 2|y|}\left|\nu_{\bar{k}_{m}}^{N,(\bar{k}_{m})}(x-y)-\nu_{\bar{k}_{m}}^{N,(\bar{k}_{m})}(x)\right| \mathrm{d} x \equiv 0 .$$
Therefore, by (\ref{9a}) we have$$
\begin{aligned}
	& \sum_{\bar{k}_{m}} \int_{|x| \geq 2|y|}\left|\nu_{\bar{k}_{m}}^{N,(\bar{k}_{m})}(x-y)-\nu_{\bar{k}_{m}}^{N,(\bar{k}_{m})}(x)\right| \mathrm{d} x \\
	& \leq \sum_{\bar{k}_{m}: \sigma_m\cdot\bar{k}_{m}+k_0 \leq \log R+1} \int\left|\nu_{\bar{k}_{m}}^{N,(\bar{k}_{m})}(x-y)-\nu_{\bar{k}_{m}}^{N,(\bar{k}_{m})}(x)\right| \mathrm{d} x \\
	&=\sum_{\bar{k}_{m}: \sigma_m\cdot \bar{k}_{m}+k_0 \leq \log R+1} \int\left|\nu^{N,(\bar{k}_{m})}\left(z-2^{\sigma_m \cdot \bar{k}_{m}} y\right)-\nu^{N,(\bar{k}_{m})}(z)\right| \mathrm{d} z \\
	& \leq \sum_{\bar{k}_{m}: \sigma_m\cdot\bar{k}_{m}+k_0 \leq \log R+1} A 2^{-\delta_1 N}\left(2^{\sigma_m \cdot \bar{k}_{m}}|y|\right)^{\delta_2} \\
	& \leq A 2^{-\delta_1 N} \sum_{\bar{k}_{m}: \sigma_m\cdot\bar{k}_{m}+k_0 \leq \log R+1}\left(2^{\sigma_m \cdot \bar{k}_{m}} 2^{k_0}\right)^{\delta_2} \\
	& \leq A 2^{-\delta_1 N} C\left(R, \sigma_1, \delta_2\right).
\end{aligned}
$$
\subsection{Proof of (\ref{11a})}
By the mean value theorem, one has
\begin{align}\label{10c}
	&\left|\widehat{ \nu_{\bar{k}_{m}}^{N,(\bar{k}_{m})}}(\xi)\right| \leq C|\xi| 2^{-\sigma_m \cdot\bar{k}_{m}} 2^{-\beta_j N}.
\end{align}
Furthermore, by the van der Corput’s lemma, one knows that
\begin{align}\label{11c}
		&\left|\widehat{ \nu_{\bar{k}_{m}}^{N,(\bar{k}_{m})}}(\xi)\right| \leq \frac{C}{\left(|\xi| 2^{-\sigma_m \cdot\bar{k}_{m}}\right)^{\epsilon}}.
\end{align}
Therefore, by the convexity, there exists $\delta_3>0$ such that
\begin{align}\label{12c}
	&\left|\widehat{ \nu_{\bar{k}_{m}}^{N,(\bar{k}_{m})}}(\xi)\right| \leq C2^{-\delta_3N}\min\left(|\xi|2^{-\sigma_m \cdot\bar{k}_{m}},\frac{1}{\left(|\xi| 2^{-\sigma_m \cdot\bar{k}_{m}}\right)^{\epsilon/2}}\right).
\end{align}
Thus, by the standard Littlewood-Paley theory, we have 	\begin{align}
	\left\|\sup _{\bar{k}_{m} \in \mathbb{N}_{\star}^{n-1}}\left|f * \nu_{\bar{k}_{m}}^{N,(\bar{k}_{m})}\right|\right\|_2 \leq B 2^{-\delta_3 N}\|f\|_2 .
\end{align}
For the details, see \cite{7}.

\section{Proof Of Lemma \ref{21a}}
The proof of (\ref{23a}) and (\ref{24a}) in Lemma \ref{21a} are same as the proof of (\ref{10a}) and (\ref{11a}) respectively. Hence, we shall show the inequality (\ref{22a}).
	
\subsection{Proof of (\ref{22a})}
	Recall that we write $P\left(2^{-q_1} t_1,\cdots, 2^{-q_n} t_n\right)=2^{-\mathfrak{q} \cdot v_j} \tilde{P}(\mathfrak{t})$ where
	$$
	\tilde{P}(\mathfrak{t})=\sum_{v=\mathfrak{m} \in \Lambda} 2^{-\mathfrak{q} \cdot\left(v-v_j\right)} a_{\mathfrak{m}} \mathfrak{t}^{\mathfrak{m}}
	$$
	and
	$$
	\tilde{P}_0(\mathfrak{t})=\sum_{v=\mathfrak{m} \in  \Lambda_0} 2^{-\mathfrak{q} \cdot\left(v-v_j\right)} a_{\mathfrak{m}} t^{\mathfrak{m}} .$$

Observe that for $v \in \Lambda \backslash \Lambda_0$ and $i\in A$, $\bar{n}_{j}^{(a_i)} \cdot\left(v-v_j\right)>0$  from the definition of $\Lambda_0$. So, we set $\gamma_{j}=(\gamma_{j}^{1},\cdots,\gamma_{j}^{\alpha})$ where
$$
\gamma_{j}^{i}=\frac{1}{d} \min _{v \in \Lambda \backslash \Lambda_0}\left\{\bar{n}_{j}^{(a_i)} \cdot\left(v-v_j\right)\right\}>0 , \ (i\in \{1,2,\cdots,\alpha\}).
$$ 
Now, we rewrite $\tilde{P}(\mathfrak{t})=\tilde{P}_0(\mathfrak{t})+2^{-\gamma_{j}\cdot \bar{k}} \tilde{Q}(\mathfrak{t})$ where $\bar{k}=(k_1,k_2,\cdots,k_{\alpha})\in \mathbb{N}_{\star}^{\alpha}$.
By the fact that for $v \in \Lambda \backslash \Lambda_0$ and $\mathfrak{q} \in S(j)$,
\begin{align}
	\mathfrak{q} \cdot\left(v-v_1\right) & \geq \frac{k_1}{d} \bar{n}_{j}^{(a_1)} \cdot\left(v-v_j\right)+\cdots + \frac{k_{\alpha}}{d} \bar{n}_{j}^{(a_{\alpha})} \cdot\left(v-v_j\right)\notag\\
	& \geq \gamma_{j}\cdot \bar{k}\label{19c},
\end{align}
one knows that there exists a constant $M>0$(depends only on the coefficients of P) such that \begin{align}\label{14c}
	\left\|\nabla\tilde{Q}(\mathfrak{t})\right\|_{L^{\infty}([1 / 2,4]^{n})}\le M.
\end{align}
Let
$$
I_{\theta}:=\{\mathfrak{t} \in[\frac{1}{2}, 4]^{n}:|\nabla\tilde{P}_0(t)|>2^{-\theta (\gamma_{j}\cdot  \bar{k})}\}.
$$ for some sufficiently small $\theta\in \mathbb{Q}_{+}$. For $I_{\theta}$, we set $$\widehat{ v_{I_{\theta}}^{\bar{k},(\bar{\ell})}}(\xi):=\int_{I_{\theta}} \left[e^{-2\pi i \xi\cdot \tilde{P}(\mathfrak{t})}-e^{-2\pi i \xi\cdot \tilde{P}_{0}(\mathfrak{t})}\right] \eta(t_1)\cdots \eta(t_n) d\mathfrak{t}$$ for those $\bar{\ell}$ 's for which
$$
\frac{k_1}{d} n_{j}^{(a_1)}+\cdots+\frac{k_{\alpha}}{d}n_{j}^{(a_{\alpha})}+\frac{\ell_1}{d} n_{j}^{(b_1)}+\cdots  +\frac{\ell_{\beta}}{d} n_{j}^{(b_{\beta})}= \mathfrak{q}\in S^{\bar{k}}(j)
$$
and for all the other $\bar{\ell}$ 's we define the Fourier multiplier to be zero.
Then, by the inequality (\ref{14c}) combined with $\tilde{P}(\mathfrak{t})=\tilde{P}_0(\mathfrak{t})+2^{-\gamma_{j}\cdot \bar{k}} \tilde{Q}(\mathfrak{t})$, one can obtain that 
\begin{align}\label{13c}
\left|\widehat{ v_{I_{\theta}}^{\bar{k},(\bar{\ell})}}(\xi)\right|\le \frac{C_{m}}{|\xi|^{m}}
\end{align} for all $m\in \mathbb{N}$ and $\bar{k}\ge (k_0,k_0\cdots,k_0)$ when we fix $\theta$ such that $\theta \le \theta_0$. Here, $\theta_0$ is determined by the $M$ in (\ref{14c}) and $k_0$ is choosen by the $M$ in (\ref{14c}) and the sufficiently small $\theta$ satisfying $\theta\le \theta_0$.\\
Furthermore, by the mean-value theorem with (\ref{19c}), we have \begin{align}\label{20c}
	\left|\widehat{ v_{I_{\theta}}^{\bar{k},(\bar{\ell})}}(\xi)\right|\le C|\xi|2^{-\gamma_{j}\cdot \bar{k}}.
\end{align} Therefore, by the same argument in the proof of (\ref{9a}) combined with (\ref{13c}) and (\ref{20c}), we have  \begin{align}\label{9d}
\int\left|v_{I_{\theta}}^{\bar{k},(\bar{\ell})}(x-y)-v_{I_{\theta}}^{\bar{k},(\bar{\ell})}(x)\right| \mathrm{d} x \leq C 2^{-\gamma \cdot \bar{k}}|y|^{\delta} 
\end{align} for all $y \in \mathbb{R}$ and $\bar{k}\ge (k_0,k_0\cdots,k_0)$.
On the other hand, using Proposition 7.2 of \cite{4}, we also know that \begin{align}\label{6d}
	\int\left|v_{I_{\theta}}^{\bar{k},(\bar{\ell})}(x-y)-v_{I_{\theta}}^{\bar{k},(\bar{\ell})}(x)\right| \mathrm{d} x \leq C |y|^{\epsilon_0}
\end{align}
for all $y \in \mathbb{R}$ with $C,\epsilon_0>0$ independent of $\bar{k}$, $\bar{\ell}$  and $I_{\theta}$. For the details, see the arguments to have ($\ref{5d}$).

Now, we shall consider $\widehat{ v_{I_{\theta}^{c}}^{\bar{k},(\bar{\ell})}}(\xi)$ defined by $$\widehat{ v_{I_{\theta}^{c}}^{\bar{k},(\bar{\ell})}}(\xi):=\widehat{ v^{\bar{k},(\bar{\ell})}}(\xi)-\widehat{ v_{I_{\theta}}^{\bar{k},(\bar{\ell})}}(\xi)$$  for those $\bar{\ell}$ 's for which
$$
\frac{k_1}{d} n_{j}^{(a_1)}+\cdots+\frac{k_{\alpha}}{d}n_{j}^{(a_{\alpha})}+\frac{\ell_1}{d} n_{j}^{(b_1)}+\cdots  +\frac{\ell_{\beta}}{d} n_{j}^{(b_{\beta})}= \mathfrak{q}\in S^{\bar{k}}(j)
$$
For all the other $\bar{\ell}$ 's, we define the Fourier multiplier to be zero. By the sublevel set estiamte, there exists $\delta'>0$ such that \begin{align}\label{15c}
I_{\theta}^{c}:=	\left|\{\mathfrak{t} \in[\frac{1}{2}, 4]^{n}:|\nabla\tilde{P}_0(t)|\le2^{-\theta (\gamma_{j}\cdot  \bar{k})}\}\right|\le C(2^{-\theta (\gamma_{j}\cdot  \bar{k})})^{\delta'}.
\end{align}So, due to (\ref{15c}), we have
\begin{align}\label{71c}
	\int\left|v_{I_{\theta}^{c}}^{\bar{k},(\bar{\ell})}(x)\right| \mathrm{d} x &\leq \int |\int_{I_{\theta}^{c}}\int e^{2\pi(x-\tilde{P}(\mathfrak{t}))\xi}d\xi d\mathfrak{t}|dx+ \int |\int_{I_{\theta}^{c}}\int e^{2\pi(x-\tilde{P}_{0}(\mathfrak{t}))\xi}d\xi d\mathfrak{t}|dx\\
	&= |I_{\theta}^{c}|(\int |\delta(x)|dx+ \int |\delta(x)|dx) \notag\\
	&\le C(2^{-\theta (\gamma_{j}\cdot  k)})^{\delta'}\notag
\end{align}where $\delta(x)$ is the Dirac delta distribution.
On the other hand, by Proposition 7.2 of \cite{4} again, one can obtain that
\begin{align}\label{70c}
	\int\left|v_{I_{\theta}^{c}}^{\bar{k},(\bar{\ell})}(x-y)-v_{I_{\theta}^{c}}^{\bar{k},(\bar{\ell})}(x)\right| \mathrm{d} x \leq C |y|^{\epsilon_0}
\end{align}
for all $y \in \mathbb{R}$ with $C,\epsilon_0>0$ independent of $\bar{k}$ and $\bar{\ell}$.\\
Therefore, by (\ref{71c}) and (\ref{70c}) we have 
\begin{align}\label{10d}
	\int\left|v_{I_{\theta}^{c}}^{\bar{k},(\bar{\ell})}(x-y)-v_{I_{\theta}^{c}}^{\bar{k},(\bar{\ell})}(x)\right| \mathrm{d} x \leq C 2^{-\gamma \cdot \bar{k}}|y|^{\delta} \text { for all } y \in \mathbb{R}.
\end{align}
(\ref{9d}), (\ref{6d}) and (\ref{10d}) implies the desired results.

\end{document}